\documentclass[dvips,ejs,twoside,preprint]{imsart}

\RequirePackage[OT1]{fontenc}
\RequirePackage{amsthm,amsmath,amssymb,graphicx}
\RequirePackage[numbers]{natbib}
\RequirePackage[colorlinks,citecolor=blue,urlcolor=blue]{hyperref}
\RequirePackage{hypernat}

\doi{10.1214/11-EJS626}
\pubyear{2011}
\volume{5}
\issue{0}
\firstpage{750}
\lastpage{774}

\startlocaldefs
\def\E{\mathbb{E}}
\def\P{\mathbb{P}}

\def\N{\mathbb{N}}
\numberwithin{equation}{section}
\theoremstyle{plain}
\newtheorem{thm}{Theorem}[section]
\newtheorem{prop}{Proposition}[section]

\newtheorem{cor}{Corollary}[section]
\theoremstyle{definition}
\newtheorem{dfn}{Definition}[section]
\newtheorem{rmk}{Remark}[section]
\endlocaldefs

\begin{document}

\begin{frontmatter}
\title{Sparsity considerations for dependent variables}
\runtitle{Sparsity considerations for dependent variables}

\begin{aug}
\author{\fnms{Pierre} \snm{Alquier}\corref{}\thanksref{t1}\ead[label=e1]{alquier@ensae.fr}
\ead[label=u1,url]{http://alquier.ensae.net/}}

\address{CREST - ENSAE \\
3, avenue Pierre Larousse \\
92240 Malakoff France \\
and \\
 LPMA - Universit\'e Paris 7 \\
 175, rue du Chevaleret \\
75205 Paris Cedex 13 France\\
\printead{e1}\\
\printead{u1}}
\end{aug}
\vspace*{-6pt}
\medskip
\textbf{\and}
\vspace*{-6pt}
\begin{aug}
\author{\fnms{Paul} \snm{Doukhan}\ead[label=e2]{doukhan@u-cergy.fr}\ead[label=u2,url]{http://doukhan.u-cergy.fr/}}

\address{Universit\'e de Cergy-Pontoise \\
Laboratoire de Math\'ematiques - Analyse, G'eom\'etrie, Mod\'elisation \\
Site de Saint-Martin \\
2, avenue Adolphe Chauvin \\
95302 Cergy-Pontoise Cedex France \\
\printead{e2}\\
\printead{u2}
}
\end{aug}

\thankstext{t1}{Research partially supported by the French ``Agence
Nationale pour la Recherche''
under grant ANR-09-BLAN-0128 ``PARCIMONIE''.}

\thankstext{t2}{The authors would like to thank the anonymous
reviewers for their valuable comments and suggestions to improve the quality of the paper.}

\runauthor{P. Alquier and P. Doukhan}

\affiliation{Univ. Paris 7, CREST and Univ. Cergy}

\begin{abstract}
The aim of this paper is to provide a comprehensive introduction for the study
of $\ell_{1}$-penalized estimators in the context of dependent observations.
We define a general $\ell_{1}$-penalized estimator for solving
problems of stochastic optimization. This estimator
turns out to be the LASSO \cite{LassoTib} in the regression estimation setting.
Powerful theoretical guarantees on the statistical performances of the LASSO were
provided in recent papers, however, they usually only deal with the iid case.
Here, we study this estimator under various dependence assumptions.
\end{abstract}

\begin{keyword}[class=AMS]
\kwd[Primary ]{62J07}
\kwd[; secondary ]{62M10}
\kwd{62J05}
\kwd{62G07}
\kwd{62G08}
\end{keyword}

\begin{keyword}
\kwd{Estimation in high dimension}
\kwd{weak dependence}
\kwd{sparsity}
\kwd{deviation of empirical mean}
\kwd{penalization}
\kwd{LASSO}
\kwd{regression estimation}
\kwd{density estimation}
\end{keyword}

\received{\smonth{2} \syear{2011}}

\end{frontmatter}

\vspace*{-12pt}
\tableofcontents

\section{Introduction}

\subsection{Sparsity in high dimensional estimation problems}

In the last few years, statistical problems in large dimension received a lot of
attention. That is,
estimation problems where the dimension of the parameter to be estimated, say $p$, is larger than the
size of the sample, usually denoted by $n$. This setting is motivated by modern
applications such as
genomics, where we often have $n\leq 100$ the number of patients with a very
rare desease, and $p$ of
the order of $10^5$ or even $10^6$ (CGH arrays), see for example \cite{vert} and the
references therein.
Other examples appear in econometrics, we refer the reader to Belloni and Chernozhukov
\cite{cherno1,cherno2}.

Probably the most famous example is high dimensional regression estimation: one
observes pairs $(x_{i},y_{i})$
for $1\leq i \leq n$ with $y_{i}\in\mathbb{R}$, $x_{i}\in\mathbb{R}^{p}$ and one
wants to find a
$\theta\in\mathbb{R}^{p}$ such that for a new pair $(x,y)$, $\theta'x$ would be
a good prediction for $y$.
If $p\geq n$, it is well known that a good estimation cannot be performed unless
we make an additional
assumption. Very often, it is quite natural to assume that there exists
such a $\theta$ that is
sparse: most of its coordinates are equal to $0$. If we let $\|\theta\|_{0}$
denote the number of
non-zero coordinates in $\theta$, this means that $\|\theta\|_{0}\ll p$. In the
genomics example,
it means that only a few genes are relevant to explain the desease. Early
examples of estimators introduced to deal with
this kind of problems include the now famous AIC \cite{aic} and BIC \cite{bic}.
Both can be written
\begin{equation}
\label{aicbic}
\arg\min_{\theta\in\mathbb{R}^{p}}  \left\{ \sum_{i=1}^{n} \left(y_i
-\theta'x_i\right)^{2}
         + \lambda_n \|\theta\|_{0} \right\}
\end{equation}
where $\lambda_n>0$ differs in AIC and BIC.
Despite AIC and BIC may give poor results when $p\geq n$ (see \cite{birgemassart}),
taking $\lambda\geq 2\sigma\log(p)$ leads to estimators with very
satisfying statistical
properties ($\sigma^2$ being the variance of the noise). See for
example \cite{birgemassart,BTW07} for such results, and \cite{baraudgiraud} in
the case of unknown variance.

The main problem with this so-called $\ell_0$ penalization approach is that the effective
computation of the estimators defined in \eqref{aicbic} is very time consuming.
In practice, these estimators
cannot be used for $p$ more than a few tens. This motivated the study of the
LASSO introduced by Tibshirani
\cite{LassoTib}. This estimator is defined by
\begin{equation*}
\arg\min_{\theta\in\mathbb{R}^{p}}  \left\{ \sum_{i=1}^{n} \left(y_i
-\theta'x_i\right)^{2}
         + \lambda_n \|\theta\|_{1} \right\}.
\end{equation*}
The convexity of this minimization problem ensures that the estimator can be
computed for very large $p$,
see Efron {\it et al.} \cite{LARS} for example. This motivated a lot of
theoretical studies on the statistical
performances of this estimator. The results with the weakest hypothesis can be
found in the work of Bickel {\it
et al.} \cite{Lasso3} or Koltchinksii \cite{kolt}.
See also very nice reviews in the paper by Van de Geer
and B\"uhlmann \cite{vdgb} or in
the PhD Thesis of Hebiri \cite{hebiri}. Also note that
a quantity of variants of the idea of $\ell_{1}$-penalization were studied
simultaneously to the LASSO: among
others the basis pursuit \cite{basis1,basis2}, the Dantzig Selector
\cite{Dantzig}, the Elastic Net
\cite{zou-elastic}...

Another problem of estimation in high dimension is the so-called problem of
sparse density estimation. In this
setting, we observe $n$ random variables with (unknown) density $f$ and the
purpose is to estimate $f$ as a linear
combination of some functions $\varphi_1$, \ldots, $\varphi_p$. If $p\geq n$ and
$$ f(\cdot)\simeq \sum_{j=1}^{p}\theta_j \varphi_j(\cdot) $$
we can use the SPADES (for SPArse Density Estimator) by Bunea {\it et al.}
\cite{spades,spades2} or the iterative feature
selection procedure in \cite{alquierdensity}.

One of the common features of all the theoretical studies of sparse estimators is
that they focus only on
the case where the observations are independent. For example, for the density
estimation case, in \cite{spades2}
and \cite{alquierdensity} the observations are assumed to be iid. The purpose of
this paper is to propose a
unified framework. Namely, we define a general stochastic optimization problem
that contains as a special case
regression and density estimation. We then define a general $\ell_1$-penalized
estimator for this problem, in
the special case of regression estimation this estimator is actually the LASSO
and in the case of density estimation
it is SPADES. Finally, we provide guarantees on the statistical performances of
this estimator in the spirit of
\cite{Lasso3}, but we do not only consider independent observations: we want to
study the case of dependent
observations, and prove that we can still recover the target $\theta$ in this
case, under various hypothesis.

\subsection{\texorpdfstring{General setting and $\ell_1$-penalized
estimator}{General setting and $ell_1$-penalized estimator}}

We now give the general setting and notations of our paper. Note that the cases
of regression and density
estimation will appear as particular cases.

We observe $n$ random variables in $\mathcal{Z}: Z_{1}, \ldots, Z_{n}$. Let
$\mathbb{P}$ be the distribution
of $(Z_{1},\ldots,Z_{n})$. We have a function
$ Q: \mathcal{Z} \times \mathbb{R}^{p} \rightarrow \mathbb{R} $
such that for any $z\in\mathcal{Z}$, $\theta\in\mathbb{R}^{p}\mapsto
Q(z,\theta)$ is a
quadratic function. The objective is the estimation of a value
$\overline{\theta}$ that minimizes the following expression which only depends
on $n$ and $\theta$:
$$ R(\theta)= \frac{1}{n}\sum_{i=1}^{n}\mathbb{E} Q(Z_{i},\theta) =
\int_{\mathcal{Z}^{n}} \frac{1}{n}\sum_{i=1}^{n} Q(z_{i},\theta)
d\mathbb{P}(z_{1},\ldots,z_{n}) .$$
All the results that will follow are intended to be interesting in the case
$p>n$ on the condition that
$\|\overline{\theta}\|_{0}:={\rm card}\{j:\overline{\theta}_{j} \neq 0\} $ is
small.

We use the following estimator:
$$  \arg\min_{\theta\in\mathbb{R}^{p}} \left[
                         \frac{1}{n}\sum_{i=1}^{n} Q(Z_{i},\theta) + \lambda
\|\theta\|_{1}\right] $$
and $\hat{\theta}_{\lambda}$ denotes any solution of this minimization problem.
\medskip

We now detail the notations in the two examples of interest:
\begin{enumerate}
\item in the regression example, $Z_{i}=(X_{i},Y_{i})$ with the
$X_{i}\in\mathbb{R}^{p}$ deterministic,
and
\begin{equation}
\label{eqreg}
Y_{i}=X_{i}'\theta+\varepsilon_{i}
\end{equation}
where $\mathbb{E}(\varepsilon_{i})=0$ (the $\varepsilon_{i}$
are not necessarily iid, they may be dependent and have different distribution).
Here we
take $Q((x,y),\theta)=(y-x'\theta)^{2}$. In this example,
$\hat{\theta}_{\lambda}$ is known as the
LASSO estimator \cite{LassoTib}.
\item in the density estimation case, $Z_{i}\in\mathbb{R}$ have the same density
wrt Lebesgue measure (but they
are not necessarily independent). We have a family of functions
$(\varphi_{i})_{i=1}^{p}$ and we want
to estimate the density $f$ of $Z_{i}$ by functions of the form
$$ f_{\theta}(\cdot) = \sum_{i=1}^{p} \theta_{i} \varphi_{i}(\cdot) .$$
In this case we take
$$ Q(z,\theta)=\int f_{\theta}^{2}(\zeta)d\zeta - 2 f_{\theta}(z) $$
and note that this leads to
$$ R(\theta) = \int \left(f_{\theta}(x) - f(x)\right)^{2}dx - \int f^{2}(x)dx
                = \int \left(f_{\theta}(x) - f(x)\right)^{2}dx - {\rm cst} .$$
Then $\hat{\theta}_{\lambda}$ is the estimator known as SPADES \cite{spades2}.
\end{enumerate}

\subsection{Overview of the paper}

In Section \ref{sectionmainresult} we provide a sparsity inequality that extend
the one
of Bickel {\it et al.} \cite{Lasso3} to the case of non iid variables. This
result involves
two assumptions: the first one is about the function $Q$ and is already needed in
the iid
  case. It is usually refered as
Restricted Eigenvalue Property.
The other hypothesis is more involved, it is specific to the non iid case. It
roughly says that we are able to control the deviations of empirical means of
dependent variables around their expectations.

In Section \ref{examples}, we provide several examples of classical assumptions on the
observations that can ensure that we have such a control. These assumptions
are expressed in terms of weak dependence coefficients, so in the beginning of this
section we briefly introduce weak dependence. We also provide some references.

We apply
the results of Sections \ref{sectionmainresult} and \ref{examples} to regression
estimation
in Section \ref{regression} and to density estimation in Section \ref{density}.

Finally the proofs are given in Section \ref{proofs}.

\section{Main result}
\label{sectionmainresult}

\subsection{Assumptions and result}

First, we need an assumption on the quadratic form $R(\cdot)$.
\medskip

{\it\noindent {\bf Assumption} ${\bf A}(\kappa)$ with $\kappa>0$. As $Q(z,\cdot)$ is a
quadratic
form, we have the matrix $$\mathbf{M}=\frac{\partial^{2}}{\partial
\theta^{2}} \frac{1}{n}\sum_{i=1}^{n}Q(Z_{i},\theta) $$ that does not depend on $\theta$,
and we assume that the matrix $\mathbf{M}$ has only $1$ on its diagonal
(actually, this just means
that we renormalize the observations $X_{i}$ in the regression case, or the
function
$\varphi_{j}$ in the density estimation case), that it is non-random (here again,
this is easily checked in the two examples) and that it satisfies}
$$
\kappa \leq \inf
\left\{\frac{v'\mathbf{M} v}{\sum_{j\in J}v_{j}^{2}}
 \left|
\begin{array}{l}
v\in\mathbb{R}^{p}, \quad
 J\subset\{1,\ldots,p\}, \quad  |J|<\|\overline{\theta}\|_{0} \\
 \sum_{j\notin J} |v_{j}| \leq 3 \sum_{j\in J} |v_{j}|
\end{array}
\right.\right\}.
$$
Note that this condition, usually referred as restricted eigenvalue property (REP),
is already required in the iid setting, see
\cite{Lasso3,vdgb} for example.
In these paper it is also discussed why we cannot hope to get rid of this
hypothesis.

We set for simplicity
\begin{equation*}
W_i^{(j)}=\frac{1}{2}\frac{\partial
Q(Z_{i},\overline{\theta})}
                      {\partial \theta_{j}}, \qquad i\in\{1,\ldots , n\},\quad
j\in\{1,\ldots , p\}.
\end{equation*}
Recall that as $Q(z,\theta)$ is a quadratic function it may be written as
$Q(z,\theta)=\theta'A(z)\theta+b(z)'\theta+ c(z)$ for a $p\times p$-matrix
valued function $A$ on $\mathbb{R}^p$ and a vector function
$b:\mathbb{R}^p\to\mathbb{R}^p$ so that
$$W_i^{(j)}=(A(Z_i)\overline\theta)_{j}+\frac12(b(Z_i))_j .$$
\begin{thm}
\label{mainresult}
Let us assume that Assumption ${\bf A}(\kappa)$ is satisfied. Let us assume
that the distribution $\mathbb{P}$ of $(Z_{1},\ldots,Z_{n})$ is such that there
is a constant
$\alpha\in[0,\frac{1}{2}]$ and a decreasing continuous function $\psi(\cdot)$
with
\begin{equation}
\label{conditionthm}
\forall j\in\{1,\ldots,p\},\quad \mathbb{P}
    \left(\left|\frac{1}{n}\sum_{i=1}^{n}W_i^{(j)}\right|\geq
n^{-\frac{1}{2}+\alpha} t \right) \leq \psi(t) .
\end{equation}
Let us put
$$ \lambda \geq \lambda^{*}:= 4 n^{\alpha-\frac{1}{2}}
\psi^{-1}\left(\frac{\varepsilon}{p}\right) .$$
Then
$$
\mathbb{P}\left\{
\begin{array}{c}
\displaystyle{R(\hat{\theta}_{\lambda})-R(\overline{\theta})
                   \leq \frac{4\lambda^{2} \|\overline{\theta}\|_{0}}{\kappa}}
\\
\\
{\rm and,} \quad {\rm simultaneously}
\\
\\
\displaystyle{\|\hat{\theta}_{\lambda}-\overline{\theta}\|_{1}
     \leq \frac{2\lambda \|\overline{\theta}\|_{0}}{\kappa}}
\end{array}
\right\}\geq 1-\varepsilon.
$$
\end{thm}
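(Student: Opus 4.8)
The plan is to adapt the now-standard argument of Bickel, Ritov and Tsybakov \cite{Lasso3} to the present quadratic stochastic-optimization setting, the only genuinely new ingredient being the replacement of an iid concentration bound by the assumed deviation inequality \eqref{conditionthm}. Write $\delta=\hat{\theta}_{\lambda}-\overline{\theta}$ and let $r(\theta)=\frac{1}{n}\sum_{i=1}^{n}Q(Z_{i},\theta)$ be the empirical criterion. First I would use that $\hat{\theta}_{\lambda}$ minimizes $r(\theta)+\lambda\|\theta\|_{1}$, so that
$$ r(\hat{\theta}_{\lambda}) + \lambda\|\hat{\theta}_{\lambda}\|_{1} \le r(\overline{\theta}) + \lambda\|\overline{\theta}\|_{1}. $$
Since $Q(z,\cdot)$ is quadratic with non-random Hessian $\mathbf{M}$, the expansion $r(\theta)=r(\overline{\theta})+\nabla r(\overline{\theta})'\delta+\frac{1}{2}\delta'\mathbf{M}\delta$ is exact, and by the definition of $W_i^{(j)}$ its gradient is $\nabla r(\overline{\theta})_{j}=\frac{2}{n}\sum_{i}W_i^{(j)}=:2V_{j}$. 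Taking expectations and using $\nabla R(\overline{\theta})=0$ (because $\overline{\theta}$ minimizes $R$ and $\mathbf{M}$ is also the Hessian of $R$) gives the exact identity $R(\hat{\theta}_{\lambda})-R(\overline{\theta})=\frac{1}{2}\delta'\mathbf{M}\delta$. Substituting the expansion into the basic inequality yields
$$ \tfrac{1}{2}\delta'\mathbf{M}\delta \le \lambda\big(\|\overline{\theta}\|_{1}-\|\hat{\theta}_{\lambda}\|_{1}\big) - 2V'\delta. $$

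Next I would control the noise vector $V=(V_{1},\dots,V_{p})$. Define the event $\mathcal{E}=\{\max_{1\le j\le p}|V_{j}|\le \lambda/4\}$. Because $\lambda\ge\lambda^{*}=4n^{\alpha-\frac{1}{2}}\psi^{-1}(\varepsilon/p)$, the threshold satisfies $\lambda/4\ge n^{-\frac{1}{2}+\alpha}\psi^{-1}(\varepsilon/p)$, so applying \eqref{conditionthm} coordinatewise together with the fact that $\psi$ is decreasing gives $\mathbb{P}(|V_{j}|\ge\lambda/4)\le\psi(\psi^{-1}(\varepsilon/p))=\varepsilon/p$. A union bound over the $p$ coordinates then yields $\mathbb{P}(\mathcal{E})\ge 1-\varepsilon$, and all the remaining work is carried out deterministically on $\mathcal{E}$.

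On $\mathcal{E}$ we have $|2V'\delta|\le 2\max_{j}|V_{j}|\,\|\delta\|_{1}\le\frac{\lambda}{2}\|\delta\|_{1}$. Writing $J=\mathrm{supp}(\overline{\theta})$ with $|J|=\|\overline{\theta}\|_{0}$, the triangle inequality gives $\|\overline{\theta}\|_{1}-\|\hat{\theta}_{\lambda}\|_{1}\le\|\delta_{J}\|_{1}-\|\delta_{J^{c}}\|_{1}$; combining this with the previous display and using $\delta'\mathbf{M}\delta\ge 0$ produces the cone condition $\|\delta_{J^{c}}\|_{1}\le 3\|\delta_{J}\|_{1}$, which is exactly the constraint $\sum_{j\notin J}|v_{j}|\le 3\sum_{j\in J}|v_{j}|$ appearing in Assumption $\mathbf{A}(\kappa)$. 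I can therefore apply the restricted eigenvalue property with $v=\delta$ to obtain $\delta'\mathbf{M}\delta\ge\kappa\|\delta_{J}\|_{2}^{2}$, while Cauchy--Schwarz gives $\|\delta_{J}\|_{1}\le\sqrt{\|\overline{\theta}\|_{0}}\,\|\delta_{J}\|_{2}$. Chaining these with the bound $\delta'\mathbf{M}\delta\le 3\lambda\|\delta_{J}\|_{1}$ that comes out of the cone step controls $\|\delta_{J}\|_{2}$, hence $\|\delta_{J}\|_{1}$, and then $\|\delta\|_{1}\le 4\|\delta_{J}\|_{1}$ together with $R(\hat{\theta}_{\lambda})-R(\overline{\theta})=\frac{1}{2}\delta'\mathbf{M}\delta$; careful accounting of the numerical constants then yields the two stated inequalities simultaneously on $\mathcal{E}$.

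The step I expect to demand the most care inside this proof is the noise-control bookkeeping of the second paragraph: matching the event threshold to $\lambda^{*}$ through the generalized inverse of the \emph{decreasing} function $\psi$ and balancing the union bound so that the failure probability is exactly $\varepsilon$, together with the sign tracking needed to certify the cone condition before the restricted eigenvalue bound can be invoked. I should stress, however, that the genuinely hard part of the overall program is not the theorem itself but the verification of hypothesis \eqref{conditionthm} for dependent data—since no iid concentration is available—which is precisely the task deferred to the weak-dependence examples of Section \ref{examples}.
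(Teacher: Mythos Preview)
Your proposal is correct and follows essentially the same route as the paper: basic inequality for the minimizer, exact quadratic expansion of $r$ around $\overline{\theta}$ using the non-random Hessian $\mathbf{M}$, the identity $R(\hat{\theta}_{\lambda})-R(\overline{\theta})=\tfrac12\delta'\mathbf{M}\delta$ from $\nabla R(\overline{\theta})=0$, the union bound over $j$ with \eqref{conditionthm} to define the good event, then the cone condition and Assumption $\mathbf{A}(\kappa)$ combined with Cauchy--Schwarz.

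One small bookkeeping point worth flagging: if you merely chain the bounds as you sketch (drop $\|\delta_{J^c}\|_1$, use $\delta'\mathbf{M}\delta\le 3\lambda\|\delta_J\|_1$, then REP and $\|\delta\|_1\le 4\|\delta_J\|_1$), you land on constants like $9/2$ and $12$ rather than the stated $4$ and $2$. The paper instead keeps the term $\tfrac{\lambda}{2}\|\delta\|_1$ on the left of the key inequality
\[
R(\hat{\theta}_{\lambda})-R(\overline{\theta})+\tfrac{\lambda}{2}\|\delta\|_1 \le 2\lambda\sqrt{\tfrac{\|\overline{\theta}\|_0}{\kappa}\bigl[R(\hat{\theta}_{\lambda})-R(\overline{\theta})\bigr]}
\]
and reads it as a quadratic inequality $u^2-bu+a\le 0$ in $u=\sqrt{R(\hat{\theta}_{\lambda})-R(\overline{\theta})}$; nonnegativity of the discriminant gives the $\ell_1$ bound with constant $2$, and $u\le b$ gives the risk bound with constant $4$. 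That discriminant trick is the only place where your ``careful accounting'' needs to be made explicit to hit the exact constants of the statement.
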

The arguments of the proof of Theorem \ref{mainresult} are taken from
\cite{Lasso3}.
The proof is given in Section \ref{proofs}, page \pageref{proofs}.

Note that the hypothesis in this theorem heavily depend on the distribution of
the variables $Z_{1}$, \ldots, $Z_{n}$, and particulary on their type of
dependence. Section \ref{examples} will provide some examples of situations
where this hypothesis is satisfied.

Also note that the upper bound in the inequality is minimized if we make the
choice $ \lambda = \lambda^{*}$. Then
$$
\mathbb{P}\left\{
\begin{array}{c}
\displaystyle{R(\hat{\theta}_{\lambda})-R(\overline{\theta})
                   \leq \frac{64 }{\kappa}
\frac{\|\overline{\theta}\|_{0}\left[\psi^{-1}\left({\varepsilon}/p\right)\right
]^{2}}{n^{1-2\alpha}}}
\\
\\
{\rm and}
\\
\\
\displaystyle{\|\hat{\theta}_{\lambda}-\overline{\theta}\|_{1}
     \leq \frac{8}{\kappa}
\frac{\|\overline{\theta}\|_{0}\left[\psi^{-1}\left({\varepsilon}/p\right)\right
]^{2}}{n^{\frac{1}{2}-\alpha}}}
\end{array}
\right\}\geq 1-\varepsilon.
$$
It is important to remark that the choice $\lambda=4 n^{\alpha-\frac{1}{2}}
\psi^{-1}\bigl(\frac{\varepsilon}{p}\bigr)$ may be impossible
in practice, as the practitionner may not know $\alpha$ and $\psi(\cdot)$.
Moreover, this choice is not necessarily
the best one in practice: in the regression case with iid noise
$\mathcal{N}(0,\sigma^{2})$, we will see that this
choice leads to $\lambda=4\sigma\sqrt{2n\log(p/\varepsilon)}$. This choice requires
the knowledge of $\sigma$. Moreover it is
not usually the best choice in practice, see for
example the simulations in \cite{hebiri}. Even in the iid case, the choice of a
good $\lambda$ in practice is still
an open problem. However, note that
\begin{enumerate}
\item the question is in some sense meaningless. For example the value of
$\lambda$ that minimizes
the quadratic risk $R(\hat{\theta}_{\lambda})$ is not the same than the value of
$\lambda$ that may ensure, under
some supplementary hypothesis, that $\hat{\theta}_{\lambda}$ identifies
correctly the non-zero coordinates in
$\overline{\theta}$, see for example Leeb and P\"otscher \cite{leeb} on that
topic. One has to be careful to
what one means when one say {\it a good choice for} $\lambda$.
\item some popular methods like cross-validation seem to give good results for
the quadratic risk, at least in
the iid case. An interesting open question is to know if one can prove
theoretical results for cross validation in this setting. See also the bootstrap
method proposed in \cite{cherno2}.
\item the LARS algorithm \cite{LARS} compute $\hat{\theta}_{\lambda}$ for any
$\lambda>0$ in a very short time (coordinate descent algorithms \cite{PCO}
are valuable alternative to LARS).
\end{enumerate}

\subsection{Remarks on the density and regression estimation setting}

First, note that in the regression setting (Equation \ref{eqreg}), for any $i\in\{1,\ldots,n\}$
and $j\in\{1,\ldots,p\}$ we have
\begin{equation*}
W_i^{(j)} = (X_{j})_{i}(Y_{i}-X_{i}'\overline{\theta}) = (X_{j})_{i}\varepsilon_{i}.
\end{equation*}
Then, in the density estimation context,
\begin{multline*}
W^{(j)}_{i}
     = \int \varphi_{j}(x) f_{\overline{\theta}}(x)
dx- \varphi_{j}(Z_{i}) = \int \varphi_{j}(x) f(x)
dx-\varphi_{j}(Z_{i})
\\ =
\mathbb{E}[\varphi_{j}(Z_{1})]-\varphi_{j}(Z_{i}).
\end{multline*}

So, in both cases, the assumption given by Equation \ref{conditionthm} is
satisfied if
we have a control of the deviation of empirical means to their expectation. In
the next
sections, we discuss some conditions to obtain such controls with dependent
variables.

\section{Models fitting conditions of Theorem \ref{mainresult}}
\label{examples}

In this section, we give some results that allow to control the deviation
of empirical means to their expectations for general (non iid) obsrevations.
The idea will be, in the next sections, to apply these results to the
processes $W^{(j)}=(W^{(j)}_{i})_{1\leq i \leq n}$ for $1\leq j \leq p$.
For the sake of simplicity, in this section, we deal with a generic process
$V=(V_i)_{i\in\mathbb{Z}}$ and the applications are given in the next sections.
Various examples of pairs $(\alpha,\psi)$ are given. We will use the classical
notation
$$ S_n = \sum_{i=1}^{n} V_i .$$

\subsection{\texorpdfstring{Weak dependence ($\alpha=0$)}{Weak dependence ($alpha=0$)}}

We are going to introduce some coefficients in order to control the dependence
of the $V_i$. The first example of such coefficients are the
$\alpha$-mixing coefficients first introduced by Rosenblatt \cite{alphaR},
$$ \alpha_{V}(r) = \sup_{t\in\mathbb{Z}}
           \sup_{
\tiny{
\begin{array}{c}
U\in\sigma(V_{i},i\leq t) \\
U'\in\sigma(V_{i},i\geq t+r)
\end{array}
}
} \left| \mathbb{P}(U\cap U')
            - \mathbb{P}(U)\mathbb{P}(U') \right| .$$
The idea is that the faster $\alpha_{V}(r)$ decreases to $0$, the less dependent
are $V_i$ and $V_{i+r}$ for large $r$. Assumptions on the rate of decay allows
to prove laws of large numbers
and central limit theorems. Different mixing coefficients were then
studied, we refer the reader to \cite{MR1312160,Rio00} for more details.

The main problem with mixing coefficients is that they exclude too many processes.
It is easy to build a process $V$ satisfying a central limit theorem
with constant $\alpha_{V}(r)$, see \cite{MR2338725} Chapter 1 for an example. This motivated
the introduction of {\it weak dependence} coefficients. The monograph \cite{MR2338725}
provides a comprehensive introduction to the various weak dependence coefficients.
Our purpose here is not to define all these coefficients, but rather to introduce
some examples that allow to satisfy condition~\eqref{conditionthm} in
Theorem \ref{mainresult}.
\begin{dfn} We put, for any process $(V_{i})_{i\in\mathbb{Z}}$,
\begin{eqnarray}\label{coef}
c_{V,m}(r)  =\max_{1\le \ell<m}\!\!\!\sup_{{\tiny \begin{array}{c} t_1\le
\cdots \le t_m
\\ t_{\ell+1}-t_{\ell}\ge r\end{array}}}\!\!\!
\left|{\rm cov }\left(V_{t_1}\cdots V_{t_\ell},V_{t_{\ell+1}}\cdots
V_{t_m}\right)\right|.
\end{eqnarray}
\end{dfn}
We precise in \S-\ref{sectmom} and
in \S-\ref{sectexp} that suitable decays of those coefficients yield~\eqref{conditionthm}.
Those two sections will  provide quite different forms of the function~$\psi$.

\begin{dfn}
Let us assume that for any $r\geq 0$, for any
$g_{1}$ and $g_{2}$ respectively $L_1$ and $L_2$-Lipschitz,
where eg.,
$$
L_1 := \sup_{(x_1,\ldots,x_\ell)\ne (y_1,\ldots,y_\ell)}
\frac
{g_1(y_1,\ldots,y_\ell)-g_1(x_1,\ldots,x_\ell)}
{|y_1-x_1|+\cdots+|y_\ell-x_\ell|}.
$$
We also assume that
for any $t_1\leq \cdots \leq t_\ell \leq t_{\ell+1} \leq \cdots \leq t_{m}$
with $t_{\ell+1}-t_{\ell}\geq r$,
$$
\left|{\rm cov }\left[g_{1}(V_{t_1},\ldots, V_{t_\ell}),g_{2}(V_{t_{\ell+1}},\ldots,
V_{t_m})\right]\right| \leq \psi(L_1,L_2,\ell,m-\ell)\eta_{V}(r)
$$
with $\psi(L_1,L_2,\ell,\ell')=\ell L_1 + \ell' L_2$.
Then $V$ is said to be $\eta$-dependent with $\eta$-dependence coefficients $(\eta(r),r\geq 0)$.
\end{dfn}
\begin{rmk}
Other functions $\psi(L_1,L_2,\ell,\ell')$ allow to define the
$\lambda$, $\kappa$ and $\zeta$-dependence, see \cite{MR2338725}.
\end{rmk}
We finally provide some basic properties, proved in \cite{MR2338725}.
The following result allows a comparison between different type of coefficients.
\begin{prop}
If $\sup_{i} \|V_i\|_{\infty}\leq M$ then
\begin{eqnarray}
\label{ineqcoef}
c_{V,m}(r)&\le& mM^m\eta_{V}(r)\\
\nonumber &\le& M^m\alpha_{V}(r).
\end{eqnarray}
\end{prop}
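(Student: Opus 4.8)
The plan is to treat the two upper bounds on $c_{V,m}(r)$ as \emph{two separate estimates}, since each rests on a different covariance inequality matched to the coefficient on its right-hand side. (The two bounds are not really a single chain: a chain would assert $m\,\eta_V(r)\le\alpha_V(r)$, which is false in general — that asymmetry is exactly why weak dependence is more permissive than mixing.) So I would prove $c_{V,m}(r)\le mM^m\eta_V(r)$ and $c_{V,m}(r)\le M^m\alpha_V(r)$ independently.

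For the $\eta$-bound, I would apply the definition of $\eta$-dependence to the coordinate-product functions. Fix $\ell$ and times $t_1\le\cdots\le t_m$ with $t_{\ell+1}-t_\ell\ge r$, and take $g_1(x_1,\ldots,x_\ell)=x_1\cdots x_\ell$ and $g_2(x_{\ell+1},\ldots,x_m)=x_{\ell+1}\cdots x_m$. These raw products are not globally Lipschitz, so the first move is to exploit the hypothesis $\|V_i\|_\infty\le M$: replacing each argument by its truncation to $[-M,M]$ (a $1$-Lipschitz operation) leaves $g_1,g_2$ unchanged on the almost-sure support of the $V_{t_k}$ while making them globally Lipschitz. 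On $[-M,M]^{\ell}$ the partial derivatives of $g_1$ are bounded in modulus by $M^{\ell-1}$, so $g_1$ is $L_1=M^{\ell-1}$-Lipschitz for the $\ell_1$-distance used in the definition, and likewise $L_2=M^{m-\ell-1}$. With $\psi(L_1,L_2,\ell,\ell')=\ell L_1+\ell' L_2$ the $\eta$-dependence inequality then gives
$$\left|\mathrm{cov}(V_{t_1}\cdots V_{t_\ell},\,V_{t_{\ell+1}}\cdots V_{t_m})\right|\le\bigl(\ell M^{\ell-1}+(m-\ell)M^{m-\ell-1}\bigr)\,\eta_V(r).$$
Since $1\le\ell\le m-1$, both exponents are at most $m-1$, so (with the observations normalized, $M\ge1$) the prefactor is at most $mM^{m-1}\le mM^{m}$; taking the supremum over admissible configurations and the maximum over $\ell$ then yields the first inequality.

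For the $\alpha$-bound, I would note that $X=V_{t_1}\cdots V_{t_\ell}$ and $Y=V_{t_{\ell+1}}\cdots V_{t_m}$ are measurable with respect to $\sigma(V_i:i\le t_\ell)$ and $\sigma(V_i:i\ge t_{\ell+1})$ respectively, with $t_{\ell+1}\ge t_\ell+r$, and are bounded by $M^{\ell}$ and $M^{m-\ell}$. The classical covariance inequality for strongly mixing sequences (Davydov/Rio) bounds $|\mathrm{cov}(X,Y)|$ by $\|X\|_\infty\|Y\|_\infty\,\alpha_V(r)=M^m\alpha_V(r)$ (up to the universal constant in that inequality), and again a supremum over configurations and a maximum over $\ell$ deliver the second inequality.

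The derivative/Lipschitz-constant computation and the invocation of the covariance inequality are routine. The one genuinely delicate point — and the main obstacle — is the truncation step in the $\eta$-bound: the products are only Lipschitz after being cut off at $[-M,M]$, so I must verify that this truncation does not alter the covariance (it does not, since the $V_{t_k}$ lie in $[-M,M]$ almost surely) while it does make the $\eta$-dependence definition applicable with precisely the Lipschitz constants $M^{\ell-1}$ and $M^{m-\ell-1}$. Everything after that is bookkeeping of the exponents of $M$ and a supremum over index configurations.
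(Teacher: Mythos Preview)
Your approach is exactly the standard argument, and it is what the cited monograph does; note that the paper itself does not give a proof here but simply refers to \cite{MR2338725}. Your handling of the one genuine subtlety---truncating the product functions to $[-M,M]$ so that the $\eta$-dependence definition applies with Lipschitz constants $M^{\ell-1}$ and $M^{m-\ell-1}$---is correct, and you rightly flag both the implicit normalization $M\ge 1$ needed to pass from $mM^{m-1}$ (or even $mM^{m-2}$, since both exponents are at most $m-2$) to $mM^{m}$, and the universal constant in the $\alpha$-mixing covariance inequality that the stated bound $M^{m}\alpha_V(r)$ suppresses.
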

Finally the following property will be useful in this paper.
\begin{prop}
\label{lipeta}
If $V$ is $\eta$-dependent and $f$ is $L$-Lipschitz and bounded,
then $f(V)$ is also $\eta$-dependent with
$$ \eta_{f(V)}(r) = L \eta_{V}(r) .$$
\end{prop}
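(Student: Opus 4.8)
The plan is to reduce the $\eta$-dependence of the image process $f(V)=(f(V_i))_{i\in\mathbb{Z}}$ directly to that of $V$ by pre-composing the test functions with $f$. Fix $r\geq 0$, indices $t_1\leq\cdots\leq t_\ell\leq t_{\ell+1}\leq\cdots\leq t_m$ with $t_{\ell+1}-t_\ell\geq r$, and take two functions $h_1$, $h_2$ that are $L_1$- and $L_2$-Lipschitz as in the definition of $\eta$-dependence applied to $f(V)$. I would introduce
$$
g_1(x_1,\ldots,x_\ell):=h_1(f(x_1),\ldots,f(x_\ell)),\qquad
g_2(x_1,\ldots,x_{m-\ell}):=h_2(f(x_1),\ldots,f(x_{m-\ell})),
$$
so that $h_1(f(V_{t_1}),\ldots,f(V_{t_\ell}))=g_1(V_{t_1},\ldots,V_{t_\ell})$ and likewise for $g_2$. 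Then the covariance that must be controlled for $f(V)$ is exactly a covariance of the form appearing in the definition of $\eta$-dependence for $V$, now with test functions $g_1,g_2$.

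The key step is to bound the Lipschitz constants of $g_1$ and $g_2$. Using that $h_1$ is $L_1$-Lipschitz and $f$ is $L$-Lipschitz,
$$
|g_1(y_1,\ldots,y_\ell)-g_1(x_1,\ldots,x_\ell)|
\leq L_1\sum_{k=1}^{\ell}|f(y_k)-f(x_k)|
\leq L\,L_1\sum_{k=1}^{\ell}|y_k-x_k|,
$$
so $g_1$ is $(L\,L_1)$-Lipschitz, and by the same argument $g_2$ is $(L\,L_2)$-Lipschitz. Applying the defining inequality of $\eta$-dependence for $V$ with these constants gives
$$
\bigl|{\rm cov}[g_1(V_{t_1},\ldots,V_{t_\ell}),g_2(V_{t_{\ell+1}},\ldots,V_{t_m})]\bigr|
\leq \bigl(\ell\,L\,L_1+(m-\ell)\,L\,L_2\bigr)\eta_V(r)
=\bigl(\ell L_1+(m-\ell)L_2\bigr)\,L\,\eta_V(r),
$$
which is precisely the $\eta$-dependence inequality for $f(V)$ with $\eta_{f(V)}(r)=L\,\eta_V(r)$, as claimed.

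There is no serious obstacle here; the argument is essentially the stability of the Lipschitz class under composition, combined with the additive structure $\psi(L_1,L_2,\ell,\ell')=\ell L_1+\ell' L_2$ of the dependence functional, which makes the factor $L$ come out cleanly. The one point requiring the boundedness hypothesis on $f$ is to guarantee that all the random variables $f(V_{t_k})$ are bounded, so that the products and covariances entering the definition are finite and well defined; boundedness plays no other role in the estimate.
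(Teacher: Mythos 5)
Your proof is correct: pre-composing the test functions coordinatewise with $f$, observing that $h_i\circ(f,\ldots,f)$ is $(L\,L_i)$-Lipschitz, and using the homogeneity of $\psi(L_1,L_2,\ell,\ell')=\ell L_1+\ell' L_2$ in $(L_1,L_2)$ yields exactly the inequality defining $\eta$-dependence of $f(V)$ with coefficients $L\,\eta_V(r)$. Note that the paper itself gives no proof of this proposition --- it is quoted as a known property from \cite{MR2338725} --- and your composition argument is precisely the standard one behind that reference, including the correct observation that boundedness of $f$ serves only to keep the covariances well defined.
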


\subsubsection{Moment inequalities}\label{sectmom}

In Doukhan and Louhichi \cite{MR1719345} it is proved that if for an
even integer $2q$ we have
\begin{equation}\label{borneC}
\exists C\ge1\ \mbox{ such that:}\qquad c_{V,2q}(r)\le C(r+1)^{-q},
\quad\forall  r\ge0
\end{equation}
then Marcinkiewicz-Zygmund inequality follows:
$$
\mathbb{E}\left((V_1+\cdots+V_n)^{2q}\right)={\cal O}(n^{q})
$$
and thus $\alpha=0$ and $\psi(t)$ is of the order of $1/t^{2q}$ in~\eqref{conditionthm}.
However,
explicit constants are needed in Theorem \ref{mainresult}.
We actually have the following result.
\begin{prop}\label{momcomb}
Assume that coefficients (\ref{coef}) fit the relation (\ref{borneC}) for some
integer $q\ge1$, then Marcinkiewicz-Zygmund inequality follows
\begin{equation*}
\E\left[(V_1+\cdots+V_n)^{2q} \right]\le C^qd_{2q}(2q)!n^q
\end{equation*}
where
$$ d_{m} \equiv \frac1m\frac{(2m-2)!}{((m-1)!)^2}, \qquad m=2, 3, \ldots $$
\end{prop}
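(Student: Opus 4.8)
The goal is to bound the $2q$-th moment of the partial sum $S_n = V_1 + \cdots + V_n$ using the covariance coefficients $c_{V,2q}(r)$, with fully explicit constants rather than the $\mathcal{O}(n^q)$ order statement of Doukhan and Louhichi \cite{MR1719345}.

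\medskip

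The plan is to expand $\E[S_n^{2q}]$ directly as a sum over $2q$-tuples of indices,
$$
\E\left[(V_1+\cdots+V_n)^{2q}\right] = \sum_{i_1=1}^{n}\cdots\sum_{i_{2q}=1}^{n} \E\left[V_{i_1}\cdots V_{i_{2q}}\right],
$$
and to control each summand $\E[V_{i_1}\cdots V_{i_{2q}}]$ by peeling off a largest gap. First I would reorder each tuple so that $t_1 \le \cdots \le t_{2q}$ is its nondecreasing rearrangement, and locate an index $\ell$ where the consecutive gap $t_{\ell+1}-t_\ell$ is largest; with $2q$ ordered points, this maximal gap $r$ is at least $(t_{2q}-t_1)/(2q-1)$. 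At that split point the definition of $c_{V,2q}(r)$ in \eqref{coef} yields
$$
\left|\E\left[V_{t_1}\cdots V_{t_\ell}\right]\E\left[V_{t_{\ell+1}}\cdots V_{t_{2q}}\right] - \E\left[V_{t_1}\cdots V_{t_{2q}}\right]\right| \le c_{V,2q}(r),
$$
so that $\E[V_{t_1}\cdots V_{t_{2q}}]$ is, up to the covariance term bounded by $c_{V,2q}(r) \le C(r+1)^{-q}$, the product of two lower-order moment factors. Iterating this splitting recursively over the two pieces produces a telescoping decomposition of the full $2q$-th moment into products of covariance coefficients, and the factor $(r+1)^{-q}$ decaying like the reciprocal of the maximal gap to the power $q$ is exactly what is needed to make the index sums converge at rate $n^q$.

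\medskip

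The combinatorial bookkeeping is the delicate part. After bounding each term by a product of coefficients, I would count how many ordered $2q$-tuples give rise to each pattern of gaps, and carry out the summation over the positions. The decay $c_{V,2q}(r)\le C(r+1)^{-q}$ controls one free summation index per split at the cost of a factor $C$, while the remaining degrees of freedom contribute the $n^q$ growth; each of the $q$ "large gap" summations is geometric-type in the sense that $\sum_r (r+1)^{-q}\cdot(\text{count})$ stays bounded, and the collinear/diagonal configurations (repeated or clustered indices) only improve the bound. The explicit constant $C^q d_{2q}(2q)!$ should emerge by tracking that there are $2q$ factors contributing the factorial weight, $q$ applications of the bound \eqref{borneC} contributing $C^q$, and the Catalan-type number $d_{2q}=\frac{1}{2q}\binom{4q-2}{2q-1}$ arising from the number of admissible recursive pairings of the $2q$ points into nested blocks.

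\medskip

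The main obstacle I anticipate is making the recursion and its counting fully rigorous while keeping the constant sharp: one must verify that every ordered tuple is accounted for exactly once, that the maximal-gap lower bound $r \ge (t_{2q}-t_1)/(2q-1)$ interacts correctly with the summation over the $(r+1)^{-q}$ weights so that no divergent sub-sum appears, and that the clustering of indices (where the gap structure degenerates) genuinely yields \emph{smaller} contributions so the worst case is the fully spread-out configuration. Controlling the constant $d_{2q}$ precisely — rather than by a crude over-count — is where the bulk of the careful work lies; I would expect to invoke a generating-function or direct induction argument on $q$ to pin down $d_{2q}=\frac1{2q}\frac{(4q-2)!}{((2q-1)!)^2}$ exactly.
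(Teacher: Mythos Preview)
Your proposal is correct and follows essentially the same route as the paper: both expand the moment, reorder tuples, split at the maximal gap into a covariance term plus a product of lower-order moments, and iterate. The paper packages this as the recursion $A_{\ell,n}\le C_{q,\ell,n}+\sum_{m=2}^{\ell-2}A_{m,n}A_{\ell-m,n}$ from \cite{MR1719345}, bounds $C_{q,\ell,n}\le C(\ell-1)n^{(\ell-q)\vee 1}$ using \eqref{borneC}, and then compares the resulting numerical recursion $a_m=(m-1)+\sum_{k=2}^{m-2}a_k a_{m-k}$ to the Catalan numbers $d_m$ --- exactly your ``nested pairings'' count --- to obtain $A_{2q,n}\le d_{2q}C^q n^q$.
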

The proof follows \cite{MR1719345}, it is given in Section \ref{proofs}.
\begin{rmk}
Sharper constants $a_{2q}$ are also derived in the proof (Equation \eqref{eq7}, page
\pageref{eq7}), one may replace the constants
$2d_2,24d_4,720d_6$ by 1, 4 and 17 and using the recursion (\ref{recur}) also
improves the above mentioned bounds.
\end{rmk}
Various inequalities of this type where derived for alternative dependences (see
Doukhan
\cite{MR1312160}, Rio \cite{Rio00} and Dedecker \textit{et al.}
\cite{MR2338725} for an extensive bibliography
which also covers the case of non integer exponents).

\subsubsection{Exponential inequalities}\label{sectexp}

Using the previous inequality, Doukhan and Louhichi \cite{MR1719345} proved
exponential inequalities that would lead to $\psi(t)$ in $\exp(-\sqrt{t})$.
Doukhan and Neumann \cite{MR2330724} use alternative cumulant techniques
to get $\psi(t)$ in $\exp(-t^2)$ for suitable bounds of the previous covariances
(\ref{coef}).
\begin{thm} \label{T2.1} \cite{MR2330724}
Let us assume that $\sup_{i} \|V_i\|_{\infty}\leq M$.
Let $\Psi:\N^{2}\rightarrow\N$
be one of the following functions:
\begin{itemize}
\item[(a)] $\quad \Psi(u,v)=2v$, \item[(b)] $\quad \Psi(u,v)=u+v$,
\item[(c)] $\quad \Psi(u,v)=uv$, \item[(d)] $\quad
\Psi(u,v)=\alpha(u+v)\,+\,(1-\alpha)uv$, \qquad for some
$\alpha\in(0,1)$.
\end{itemize}
We assume that there exist constants $K,L_1,L_2<\infty$, $\mu\geq
0$, and a nonincreasing sequence of real coefficients
$(\rho(n))_{n\geq 0}$ such that, for all $u$-tuples
$(s_1,\ldots,s_u)$ and all $v$-tuples $(t_1,\ldots,t_v)$ with
$\;1\leq s_1\leq\cdots\leq s_u\leq t_1\leq\cdots\leq t_v\leq n\;$
the following inequality is fulfilled:
\begin{equation}
\label{2.1} \left| \mbox{cov }\left( V_{s_1}\cdots V_{s_u}, V_{t_1}\cdots
V_{t_v} \right) \right| \leq K^{2} M^{u+v-2}  \Psi(u,v)
\rho(t_1-s_u),
\end{equation}
where
\begin{equation*}
\sum_{s=0}^{\infty} (s+1)^{k} \rho(s) \leq\, L_1  L_2^k
(k!)^{\mu} \qquad \forall k\geq 0.
\end{equation*}
Then
\begin{equation*}
{\P}\left( S_n \,\geq\, t \right)
\,\leq\, \exp\left( -\frac{t^{2}/2}{A_n \,+\, B_n^{1/(\mu+2)}
t^{(2\mu+3)/(\mu+2)}} \right),
\end{equation*}
where $A_n$ can be chosen as any number greater than or equal to
$\sigma_n^{2}:=Var(V_{1}+\dots+V_{n})$ and
\begin{eqnarray*} B_n = 2  (K\vee M)
L_2 \; \left( \Big( \frac{ 2^{4+\mu} n  K^{2}  L_1 }{ A_n }
\Big) \vee 1 \right).
\end{eqnarray*}
\end{thm}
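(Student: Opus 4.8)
The proof goes through the cumulant method, which is the sharp analogue of the moment route behind Proposition~\ref{momcomb} and produces exponential rather than polynomial tails. Write $\Gamma_m(S_n)$ for the $m$-th cumulant of $S_n$; by multilinearity of cumulants
$$\Gamma_m(S_n)=\sum_{i_1,\ldots,i_m=1}^{n}\mathrm{cum}(V_{i_1},\ldots,V_{i_m}),$$
with $\Gamma_2(S_n)=\sigma_n^2$. The plan is twofold: (i) bound the joint cumulants $\mathrm{cum}(V_{i_1},\ldots,V_{i_m})$ using the covariance hypothesis~\eqref{2.1} and sum over index configurations to control $\Gamma_m(S_n)$; (ii) insert the resulting growth rate into a Saulis--Statulevi\v{c}ius-type lemma that converts cumulant bounds into a Bernstein-type deviation inequality.

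For step (i) I would order the indices increasingly and split the ordered tuple at its largest gap. The standard moment/cumulant inversion expresses $\mathrm{cum}(V_{i_1},\ldots,V_{i_m})$ as a signed sum, over partitions, of products of joint moments; grouping the indices on either side of the chosen gap and applying~\eqref{2.1} across it bounds each joint cumulant by a combinatorial factor times $M^{m-2}$ times $\rho(\text{gap})$. Summing over positions, the gap sums are controlled precisely by the hypothesis $\sum_{s\ge0}(s+1)^k\rho(s)\le L_1L_2^k(k!)^\mu$: the power $k$ counts the indices lying on one side of the gap, and the factor $(k!)^\mu$ is the mechanism injecting $\mu$ into the final rate. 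The four admissible $\Psi$ only affect the polynomial-in-$(u,v)$ prefactor, and since case (d) is a convex combination of the additive form $u+v$ and the multiplicative form $uv$, it suffices to treat those two extremes. The outcome is a bound of the form
$$|\Gamma_m(S_n)|\le\frac{(m!)^{1+\gamma}}{\Delta_n^{\,m-2}}\,\frac{A_n}{2},\qquad \gamma=1+\mu,\quad m\ge2,$$
where $A_n$ dominates $\sigma_n^2$ and the Cram\'er width $\Delta_n$ is built from $K$, $M$ and $L_2$; pinning down these constants is where $B_n$ emerges.

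For step (ii), cumulants obeying a bound of this shape with factorial exponent $1+\gamma=2+\mu$ fall directly under the Saulis--Statulevi\v{c}ius cumulant lemma. Optimizing the associated Cram\'er transform balances the subgaussian contribution $t^2/(2A_n)$ against the higher cumulant terms, whose factorial growth $(m!)^{1+\gamma}$ forces the correction term to scale as $t^{(2\gamma+1)/(\gamma+1)}=t^{(2\mu+3)/(\mu+2)}$ with coefficient $B_n^{1/(\mu+2)}$. This produces exactly the stated inequality: the leading term $A_n$ is the variance bound $\Gamma_2(S_n)=\sigma_n^2$, and when $\mu=0$ the correction reduces to the familiar $t^{3/2}$ subexponential term, i.e. an $\exp(-c\sqrt{t})$ tail.

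The main obstacle is the cumulant estimate of step (i): carrying out the recursive splitting so that~\eqref{2.1} can legitimately be invoked at each cut, and then performing the multiple summation over index positions while tracking the combinatorial constants. Obtaining the exact factorial growth $(m!)^{2+\mu}$---rather than a crude $(m!)^{2}$ or an uncontrolled constant---hinges on the sharp summability input $\sum_{s}(s+1)^k\rho(s)\le L_1L_2^k(k!)^\mu$ together with careful bookkeeping of the signed moment products generated by the inversion formula. This bookkeeping is the delicate heart of the argument, and it is also what fixes the explicit form of $B_n$.
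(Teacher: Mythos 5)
This theorem is not proved in the paper at all: it is imported verbatim, with attribution, from Doukhan and Neumann \cite{MR2330724}, and the paper's proof section only treats Theorem \ref{mainresult} and Proposition \ref{momcomb}. The comparison therefore has to be with the proof in \cite{MR2330724} itself, and your sketch does reconstruct that proof's actual architecture: bound $\Gamma_m(S_n)=\sum_{i_1,\ldots,i_m}\mathrm{cum}(V_{i_1},\ldots,V_{i_m})$ by exploiting \eqref{2.1} across the largest gap of the ordered index tuple, use the summability hypothesis $\sum_{s}(s+1)^k\rho(s)\le L_1L_2^k(k!)^{\mu}$ to absorb the sums over index positions (this is indeed where $\mu$ enters), and then feed the resulting bound of the form $|\Gamma_m(S_n)|\lesssim (m!)^{2+\mu}H/\Delta_n^{m-2}$ into a Bentkus--Rudzkis/Saulis--Statulevi\v{c}ius-type cumulant lemma, whose optimization with $\gamma=\mu+1$ produces exactly the exponent $(2\gamma+1)/(\gamma+1)=(2\mu+3)/(\mu+2)$ and the $B_n^{1/(\mu+2)}$ prefactor; your $\mu=0$ sanity check (an $\exp(-c\sqrt{t})$ tail for large $t$) is also correct. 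The one caution is that what you have written is a plan whose hard core is deferred: the moment--cumulant inversion you invoke in step (i) involves on the order of $m!$ partitions, so applying \eqref{2.1} partition by partition introduces an extra factorial that would destroy the $(m!)^{2+\mu}$ growth, and Doukhan and Neumann avoid this with a dedicated recursive combinatorial cumulant lemma --- precisely the ``delicate bookkeeping'' you acknowledge but do not carry out. Since the present paper cites the result rather than reproving it, that gap is immaterial here, but a self-contained proof would stand or fall on that lemma.
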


\begin{rmk}
\label{remarqueDN}
One can easily check that if $V$ is $\eta$-dependent then
\eqref{2.1} is satisfied with $\Psi$ as in {\it (b)}, $K^{2}=M$
and $\rho(r)=\eta(r)$, see Remark 9 page 9 in \cite{MR2330724}. So if $V$ is
$\eta$-dependent and $\eta(r)$ decreases fast enough to $0$ then
we have an exponential inequality.
\end{rmk}
This result yields convienient bounds for the function $\psi$.
A recent paper by Olivier Wintenberger \cite{oliv1} is also of
interest: it directly yields alternative results from our main
result. In this paper, we do not intend to provide the reader
with encyclopedic references but mainly to precise some ideas and
techniques so that this will  be developed in further papers.
\vfill\eject

\subsection{\texorpdfstring{Long range dependence
$(\alpha\in]0,\frac12[$)}{Long range dependence $(alpha\in]0,\frac12[$)}}
\subsubsection{Power decays} Assume now that $V$ is a centered series satisfies
$\sum_i\sup_k\left|{\rm cov}(V_k,V_{k+i})\right|=\infty$ then
$\alpha>0$
 may occur, eg. if $$r(i)\equiv\sup_k\left|{\rm
cov}(V_{k},V_{k+i})\right|\sim i^{-\beta}$$ for $\beta\in]0,1]$ then
 ${\rm var}(S_n)\sim n^{2-\beta}$; then $\alpha=(1-\beta)/2$ holds.
 \subsubsection{Gaussian case} In the special case of Gaussian processes
$(V_i)_i$, tails of $S_n$ are classically described because
 $S_n\sim{\cal N}(0,\sigma_n^2)$
 and here  $\psi(t)=\exp(-t^2)$. We thus may obtain simultaneously subGaussian
tails and
 $\alpha=(1-\beta)/2>0$.
\subsubsection{Non subGaussian tails}
Assume that that for each $i,j$, $G_i\sim {\cal N}(0,1)$ and
$(G_i)_i$ is a stationary Gaussian processes with, for some
$B$, $\beta$,
\begin{equation}
\label{definitB}
r(i)={\rm cov}(G_{k},G_{k+i})\sim B i^{-\beta} .
\end{equation}
 Let $V_i=P(G_i)$ for a function with Hermite rank $m\ge1$, and
since
$${\rm cov}(H_m(G_0),H_m(G_i))=m!\left(r(i)\right)^m$$
their  covariance series is  non $m$-th summable in case $\beta\in]\frac1m,1[$.

  The case $P(x)=x^2- 1$ and $\beta\in]\frac12,1[$ is investigated by using the
following expansion in the seminal work by Rosenblatt \cite{Ros61}.

 Set $R_n$ for the   covariance matrix of the Gaussian random vector
$(G_1,\ldots,G_n)$:
\begin{eqnarray*}
\E e^{tn^{\beta-1}S_n}&=&
e^{-tn^\beta}\mbox{det}^{-\frac12}
\left(
I_n-2tn^{\beta-1}R_n\right)
\\
&=&\exp\left(
\frac12\sum_{k=2}^\infty\frac1k(2tn^{\beta-1})^k\mbox{ trace
}\left(R_n\right)^k\right).
\end{eqnarray*}
Quoting that
$$n^{k(\beta-1)}\mbox{ trace }(R_n)^k\to_{n\to\infty}
c_k>0
$$
with
$$
c_k=B^k\int_{0}^1\cdots\int_{0}^1
|x_1-x_2|^{-\beta}|x_2-x_3|^{-\beta}\cdots
|x_{k-1}-x_k|^{-\beta}|x_k-x_1|^{-\beta}dx_1\cdots dx_k
$$
($B$ is given by Equation~\eqref{definitB}),
this is thus clear that for small enough
$|t|<\tau=\frac12\sup_{k\ge2,j\ge1}\left(c_k\right)^{\frac1k}$,
$$
\E e^{tn^{\beta-1}S^{(j)}_n}
 \to_{n\to\infty}\exp\left(\frac12\sum_{k=2}^\infty(2t)^k\frac{c_k}
k\right).
$$
Here the conditions in the main theorem hold with $\psi(t)=e^{-t}$ and
$\alpha=\frac12-\beta>0$ for any  $M>1/\tau$.

\section{Application to regression estimation}
\label{regression}

In this section we apply Theorem \ref{mainresult} and the various examples of
Section
\ref{examples} to obtain results for regression estimation. Note that
the results in the iid setting are already known, they are only given here for
the sake
of completeness, in order to provide comparison with the other cases.

Let us remind that in the regression case, we want to apply the results of
Section \ref{examples} to
$$ W_{i}^{(j)} = (X_{j})_{i}\varepsilon_{i} .$$
For the sake of simplicity, in this whole session dedicated to regression,
let us put
$$ \max(X) := \max_{1\leq i \leq n} \max_{1 \leq j \leq p} |(X_{i})_{j}| .$$

\subsection{Regression in the iid case}

Under the usual assumption that the $\varepsilon_{i}$ are iid and subGaussian,
$$ \forall s,\quad \mathbb{E}[\exp(s\varepsilon_{i}^{2})] \leq
\exp\left(\frac{s^{2}\sigma^{2}}{2}\right)$$
for some known $\sigma^{2}$, then we have
$$
  \mathbb{P} \left( \left|\frac{1}{n}\sum_{i=1}^{n}W_i^{(j)}\right|
        \geq \frac{t}{\sqrt{n}} \right)   \leq \psi(t) = \exp(-\frac{t^{2}}{2\sigma^{2}}) .
$$
So we can apply Theorem \ref{mainresult} in order  to obtain the following well
known result:

\begin{cor}[\cite{Lasso3}]
In the context of Equation \ref{eqreg}, under Assumption ${\bf A}(\kappa)$,
if the $(\varepsilon_{i})$ are iid and subGaussian with
variance upper bounded by $\sigma^{2}$,
the choice $\lambda=4\sigma\sqrt{2\log(p/\varepsilon)/n}$ leads to
$$
\mathbb{P}\left(R(\hat{\theta}_{\lambda})-R(\overline{\theta})
                   \leq \frac{128 \sigma^{2}}{\kappa}
                      \frac{\|\overline{\theta}\|_{0}\log\frac{p}{\varepsilon}}
                               {n} \right)\geq 1-\varepsilon.
$$
\end{cor}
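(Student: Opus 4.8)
The plan is to read this corollary as a direct specialization of Theorem \ref{mainresult} to the regression model, so that the only genuine work is to verify the deviation hypothesis \eqref{conditionthm} with an explicit pair $(\alpha,\psi)$ and then to track constants. First I would fix a coordinate $j\in\{1,\ldots,p\}$ and recall that here $W_i^{(j)}=(X_j)_i\varepsilon_i$, so that $\frac1n\sum_{i=1}^n W_i^{(j)}=\frac1n\sum_{i=1}^n (X_j)_i\varepsilon_i$ is a weighted sum of independent centered subGaussian variables with \emph{deterministic} weights $(X_j)_i$. Since a linear combination $\sum_i a_i\varepsilon_i$ of independent $\sigma^2$-subGaussian variables is again subGaussian with parameter $\sigma^2\sum_i a_i^2$, a Chernoff bound gives $\mathbb{P}(|\sum_i (X_j)_i \varepsilon_i| \geq u) \leq 2\exp(-u^2/(2\sigma^2 \sum_i (X_j)_i^2))$. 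The point at which Assumption ${\bf A}(\kappa)$ enters is the normalization: the diagonal of $\mathbf{M}$ being equal to $1$ forces $\frac1n\sum_i (X_j)_i^2=1$, i.e. $\sum_i (X_j)_i^2=n$. Substituting $u=t\sqrt n$ then reproduces exactly the tail displayed just before the corollary, namely condition \eqref{conditionthm} with $\alpha=0$ and $\psi(t)=\exp(-t^2/(2\sigma^2))$.

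With $(\alpha,\psi)$ identified, the second step is purely computational: I would invert $\psi$ at the level $\varepsilon/p$. Solving $\exp(-t^2/(2\sigma^2))=\varepsilon/p$ gives $\psi^{-1}(\varepsilon/p)=\sigma\sqrt{2\log(p/\varepsilon)}$, and hence
$$\lambda^*=4 n^{\alpha-\frac12}\psi^{-1}\!\left(\frac{\varepsilon}{p}\right)=4 n^{-\frac12}\sigma\sqrt{2\log(p/\varepsilon)}=4\sigma\sqrt{\frac{2\log(p/\varepsilon)}{n}},$$
which is precisely the prescribed choice of $\lambda$. Thus the hypothesis $\lambda\geq\lambda^*$ of Theorem \ref{mainresult} holds here with equality, and I am in the regime where the upper bound of the theorem is minimized.

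The final step is to feed this $\lambda$ into the risk bound of Theorem \ref{mainresult}. On the event of probability at least $1-\varepsilon$ one has $R(\hat{\theta}_{\lambda})-R(\overline{\theta})\leq 4\lambda^2\|\overline{\theta}\|_0/\kappa$; since $\lambda^2=16\sigma^2\cdot 2\log(p/\varepsilon)/n=32\sigma^2\log(p/\varepsilon)/n$, this becomes $128\sigma^2\|\overline{\theta}\|_0\log(p/\varepsilon)/(\kappa n)$, which is the stated inequality. Equivalently, one can read the conclusion straight off the optimized display following Theorem \ref{mainresult} with $\alpha=0$ and $[\psi^{-1}(\varepsilon/p)]^2=2\sigma^2\log(p/\varepsilon)$.

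I expect the only delicate point to be the bookkeeping around the normalization and the various factors of $2$. One must check that the renormalization implicit in Assumption ${\bf A}(\kappa)$ really produces $\sum_i (X_j)_i^2=n$ (so that the subGaussian parameter collapses to $\sigma^2 n$ and the exponent in $\psi$ is $t^2/(2\sigma^2)$ rather than $t^2/\sigma^2$), and one must reconcile the two-sided tail $2\exp(-t^2/(2\sigma^2))$ coming from the Chernoff step with the one-sided $\psi(t)=\exp(-t^2/(2\sigma^2))$ used in the theorem; this spare factor of $2$ perturbs the logarithm only by an additive $\log 2$ and is immaterial at the level of the stated constants, being absorbed into the union bound over the $p$ coordinates. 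Everything else is a direct substitution into Theorem \ref{mainresult}.
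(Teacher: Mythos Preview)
Your proposal is correct and follows exactly the route the paper takes: the corollary is obtained by checking condition~\eqref{conditionthm} with $\alpha=0$ and $\psi(t)=\exp(-t^{2}/(2\sigma^{2}))$ via a subGaussian tail bound on $\frac1n\sum_i (X_j)_i\varepsilon_i$, and then plugging $\lambda=\lambda^*=4n^{-1/2}\psi^{-1}(\varepsilon/p)$ into Theorem~\ref{mainresult}. You actually supply more detail than the paper, which simply asserts the deviation bound and invokes the theorem; your remarks about the design normalization coming from the unit diagonal of $\mathbf{M}$ and about the innocuous factor $2$ in the two-sided tail are the right caveats to flag.
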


\subsection{Regression estimation in the dependent case}

\subsubsection{Marcinkiewicz-Zygmund type inequalities}

Let us remark that, for any $1\leq j \leq p$,
$$ c_{W^{(j)},m}(r) \leq c_{\varepsilon,m}(r)
\left(\max_{i,j}|(X_j)_i|\right)^{m} = \max(X)^{m} c_{\varepsilon,m}(r). $$
Thus, we apply Theorem \ref{mainresult} and Proposition \ref{momcomb} to obtain
the
following result.

\begin{cor}
In the context of Equation \ref{eqreg}, under Assumption ${\bf A}(\kappa)$,
if the $(\varepsilon_{i})$ satisfy, for some even integer $2q$,
\begin{equation*}
\exists C\ge1\ \mbox{ such that:}\qquad\forall  r\ge0,\qquad c_{\varepsilon,2q}(r)\le
C(r+1)^{-q},
\end{equation*}
the choice
$$\lambda= \frac{4C^{\frac{1}{2}}\max(X)^{q}}{\sqrt{n}}
\left(\frac{d_{2q}q!p}{\varepsilon}\right)^{\frac{1}{2q}} $$
leads to
$$
\mathbb{P}\left(R(\hat{\theta}_{\lambda})-R(\overline{\theta})
                   \leq
 \frac{64 C \max(X)^{2q} (d_{2q}q!)^{\frac{1}{q}}}{\kappa}
\frac{\|\overline{\theta}\|_{0} p^{\frac{1}{q}}}{\varepsilon^{\frac{1}{q}} n}
\right)\geq 1-\varepsilon.
$$
\end{cor}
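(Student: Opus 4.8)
The plan is to read this corollary off Theorem \ref{mainresult} by exhibiting, for the specific coordinate processes $W^{(j)}=(W^{(j)}_i)_{1\le i\le n}$ with $W_i^{(j)}=(X_j)_i\varepsilon_i$, an admissible pair $(\alpha,\psi)$ for condition \eqref{conditionthm}. Since the hypothesis is a polynomial (Marcinkiewicz--Zygmund) control of covariances rather than an exponential one, I expect to land in the regime $\alpha=0$, so that \eqref{conditionthm} reduces to a bound on $\P\bigl(|n^{-1}\sum_i W_i^{(j)}|\ge t/\sqrt n\bigr)$. Once a $\psi$ of the form $\mathrm{const}/t^{2q}$ is produced, the announced $\lambda$ is exactly $\lambda^{*}=4n^{-1/2}\psi^{-1}(\varepsilon/p)$, and the two conclusions of the corollary are the specialization to $\alpha=0$ of the $\lambda=\lambda^{*}$ display that follows Theorem \ref{mainresult}.

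First I would transfer the covariance control from the noise to the coordinate processes. Because the design is deterministic, $W^{(j)}_{t_1}\cdots W^{(j)}_{t_m}=\bigl(\prod_{s}(X_j)_{t_s}\bigr)\varepsilon_{t_1}\cdots\varepsilon_{t_m}$, so each scalar factor pulls out of the covariance in \eqref{coef}, giving $c_{W^{(j)},m}(r)\le \max(X)^{m}\,c_{\varepsilon,m}(r)$ as recorded just above the statement. With $m=2q$ and the assumed decay, this shows that $W^{(j)}$ satisfies \eqref{borneC} with $C$ replaced by $C\max(X)^{2q}$, a constant that is $\ge 1$ in accordance with the normalization in \eqref{borneC} (here $C\ge1$, and Assumption ${\bf A}(\kappa)$ fixes $\tfrac1n\sum_i(X_j)_i^2$ to a constant, so $\max(X)$ is of order one). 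Each $W_i^{(j)}$ is centered since $\E\varepsilon_i=0$, so Proposition \ref{momcomb} applies to $S_n^{(j)}:=\sum_{i=1}^n W_i^{(j)}$ and yields $\E\bigl[(S_n^{(j)})^{2q}\bigr]\le \bigl(C\max(X)^{2q}\bigr)^{q} d_{2q}(2q)!\,n^{q}$.

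It then remains to convert this moment bound into a tail bound by Markov's inequality applied to the $2q$-th power, $\P\bigl(|S_n^{(j)}|\ge \sqrt n\,t\bigr)\le \E[(S_n^{(j)})^{2q}]/(n^{q}t^{2q})\le C^{q}\max(X)^{2q^{2}}d_{2q}(2q)!/t^{2q}$, which is precisely \eqref{conditionthm} with $\alpha=0$ and $\psi(t)=C^{q}\max(X)^{2q^{2}}d_{2q}(2q)!/t^{2q}$. Inverting gives $\psi^{-1}(\varepsilon/p)=C^{1/2}\max(X)^{q}\,(d_{2q}(2q)!)^{1/(2q)}(p/\varepsilon)^{1/(2q)}$; substituting into $\lambda^{*}=4n^{-1/2}\psi^{-1}(\varepsilon/p)$ reproduces the stated choice of $\lambda$, and feeding $[\psi^{-1}(\varepsilon/p)]^{2}$ into the $\alpha=0$ risk bound $64\kappa^{-1}\|\overline{\theta}\|_{0}[\psi^{-1}(\varepsilon/p)]^{2}/n$ gives the announced inequality, the probability $1-\varepsilon$ being inherited directly from Theorem \ref{mainresult}. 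I do not expect a genuine obstacle: the whole difficulty has been discharged into Proposition \ref{momcomb} and Theorem \ref{mainresult}, and what is left is assembly. The only places demanding care are the deterministic-factorization step together with the check that the transferred constant legitimately satisfies \eqref{borneC}, and the bookkeeping of constants through the inversion of $\psi$ and the final squaring---where one should verify the precise numerical factor (the $(2q)!$ delivered by Proposition \ref{momcomb} against the $q!$ written in the statement).
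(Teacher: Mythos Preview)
Your proposal is correct and follows precisely the route the paper indicates: it merely says ``we apply Theorem \ref{mainresult} and Proposition \ref{momcomb}'' after recording the inequality $c_{W^{(j)},m}(r)\le \max(X)^{m}c_{\varepsilon,m}(r)$, and you have filled in exactly those details (transfer of the covariance bound, Proposition \ref{momcomb}, Markov at order $2q$, then $\alpha=0$ in Theorem \ref{mainresult}). Your closing caveat is also well placed: Proposition \ref{momcomb} delivers $(2q)!$, which makes the derived $\lambda^{*}$ carry $(d_{2q}(2q)!)^{1/(2q)}$ rather than the $(d_{2q}q!)^{1/(2q)}$ printed in the corollary; since Theorem \ref{mainresult} requires $\lambda\ge\lambda^{*}$, this looks like a typographical slip in the statement rather than a flaw in your argument.
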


\begin{rmk}
This result aims at filling a gap for non subGaussian and non iid random variables.

The result still allows to deal with the \textsl{sparse} case $p>n$ in case
$q>1$.
In this case we deal with the case $p=n^{q/2}$ and we get a rate of convergence
in probability ${\cal O}(1/\sqrt n)$.

If $q=1$ and $\frac pn\to0$ the least squares methods apply which make such
sparsity algorithms less relevant.

 Moreover if $q<1$ the present method is
definitely not efficient. Hence in the case of heavy tails, such as considered in
the paper by Bartkiewicz {\it et al.} \cite{oliv2}, our results are
useless. Anyway, using least squares for heavy tailed models (without second
order moments) does not look to be a good idea!
\end{rmk}

\subsubsection{Exponential inequalities}

Using Theorem \ref{mainresult} and Theorem \ref{T2.1} we prove the following
result.

\begin{cor}
Let us assume that the $(\varepsilon_{i})$ satisfy the hypothesis of Theorem
\ref{T2.1}: let $\Psi:\N^{2}\rightarrow\N$
be one of the functions of Theorem \ref{T2.1}, we assume that there
are constants $K,L_1,L_2<\infty$, $\mu\geq
0$, and a nonincreasing sequence of real coefficients
$(\rho(n))_{n\geq 0}$ such that, for all $u$-tuples
$(s_1,\ldots,s_u)$ and all $v$-tuples $(t_1,\ldots,t_v)$ with
$\;1\leq s_1\leq\cdots\leq s_u\leq t_1\leq\cdots\leq t_v\leq n\;$
the following inequality is fulfilled:
\begin{equation*}
\left| \mbox{cov }\left( \varepsilon_{s_1}\cdots \varepsilon_{s_u}, \varepsilon_{t_1}\cdots
\varepsilon_{t_v} \right) \right| \leq K^{2} M^{u+v-2}  \Psi(u,v)
\rho(t_1-s_u),
\end{equation*}
where
\begin{equation*}
\sum_{s=0}^{\infty} (s+1)^{k} \rho(s) \leq\, L_1  L_2^k
(k!)^{\mu} \qquad \forall k\geq 0.
\end{equation*}
Let $c$ be a positive constant and let us put
$$ \mathcal{C} := 4 K^{2} \max(X)^{2} \Psi(1,1)L_1 +
c 2L_{2} \max(X) (K\vee M)\left(\frac{2^{\mu+3}}{\Psi(1,1)}\vee 1\right) .$$
Let us assume that $\varepsilon>0$, $p$ and $n$ are such that
$$
p \leq \frac{\varepsilon}{2} \exp\left(\frac{c^{2}
n^{\frac{1}{\mu+2}}}{\mathcal{C}}\right)
$$
then for
$$\lambda =
        4 \sqrt{\frac{\mathcal{C} \log\left(\frac{2 p}{\varepsilon}\right)}{n}} $$
we have
$$
\mathbb{P}\left\{
\displaystyle{R(\hat{\theta}_{\lambda})-R(\overline{\theta})
                   \leq \frac{64 \mathcal{C} }{\kappa}
\frac{\|\overline{\theta}\|_{0} \log \left(\frac{2p}{\varepsilon}\right) }{n}}
\right\}\geq 1-\varepsilon.
$$
\end{cor}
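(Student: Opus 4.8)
The plan is to reduce the statement to a single application of Theorem~\ref{mainresult} with $\alpha=0$, by verifying condition~\eqref{conditionthm} for each of the $p$ coordinate processes $W^{(j)}=((X_j)_i\varepsilon_i)_{1\le i\le n}$ through the exponential inequality of Theorem~\ref{T2.1}. First I would record, exactly as for the Marcinkiewicz--Zygmund case, that multiplying by the deterministic design only rescales the covariances: for any $u$-tuple $s_1\le\cdots\le s_u$ and $v$-tuple $t_1\le\cdots\le t_v$,
\[
\left|\mbox{cov}\left(W^{(j)}_{s_1}\cdots W^{(j)}_{s_u},W^{(j)}_{t_1}\cdots W^{(j)}_{t_v}\right)\right|
\le \max(X)^{u+v}\left|\mbox{cov}\left(\varepsilon_{s_1}\cdots\varepsilon_{s_u},\varepsilon_{t_1}\cdots\varepsilon_{t_v}\right)\right|,
\]
so that the hypothesis assumed on $\varepsilon$ transfers to $W^{(j)}$ with the \emph{same} $\Psi$, $\mu$, $L_1$, $L_2$ and sequence $\rho$, but with $K$ and $M$ replaced by $K'=K\max(X)$ and $M'=M\max(X)$ (the bound $\|W^{(j)}_i\|_\infty\le M'$ holds too, since $\max(X)^{u+v}K^2M^{u+v-2}=(K')^2(M')^{u+v-2}$). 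This is the only point where the regression structure is used.

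Next I would apply Theorem~\ref{T2.1} to $S_n=\sum_{i=1}^n W^{(j)}_i$. The $u=v=1$ case of the covariance bound gives $\sigma_n^2=\mbox{Var}(S_n)\le (K')^2\Psi(1,1)\sum_{s,t}\rho(|t-s|)\le 2(K')^2\Psi(1,1)L_1\,n$, using $\sum_{s\ge0}\rho(s)\le L_1$. Hence I may take $A_n=2(K')^2\Psi(1,1)L_1\,n$, and with this choice $2^{4+\mu}n(K')^2L_1/A_n=2^{\mu+3}/\Psi(1,1)$, so that $B_n=2(K'\vee M')L_2\bigl((2^{\mu+3}/\Psi(1,1))\vee1\bigr)$. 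Since $K'\vee M'=\max(X)(K\vee M)$, the two summands of $\mathcal{C}$ are exactly $\mathcal{C}_1=2A_n/n$ and $\mathcal{C}_2=cB_n$, which yields the bookkeeping identity $\tfrac{n\mathcal{C}}{2}=A_n+\tfrac{ncB_n}{2}$ that drives the rest.

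The heart of the argument is the verification of \eqref{conditionthm} with $\alpha=0$. Using symmetrisation, Theorem~\ref{T2.1} gives $\mathbb{P}(|S_n|\ge T)\le 2\exp\bigl(-\tfrac{T^2/2}{A_n+B_n^{1/(\mu+2)}T^{(2\mu+3)/(\mu+2)}}\bigr)$, so I would take
\[
\psi(t)=2\exp\left(-\frac{nt^2/2}{A_n+B_n^{1/(\mu+2)}(\sqrt n\,t)^{(2\mu+3)/(\mu+2)}}\right),
\]
which is decreasing and continuous, so that \eqref{conditionthm} holds with this $\psi$; it then only remains to upper bound $\psi^{-1}(\varepsilon/p)$. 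Showing $\psi(t^\star)\le\varepsilon/p$ at $t^\star=\sqrt{\mathcal{C}\log(2p/\varepsilon)}$ forces $\psi^{-1}(\varepsilon/p)\le t^\star$ because $\psi$ is decreasing; writing $T^\star=\sqrt{n\mathcal{C}\log(2p/\varepsilon)}$ and invoking the identity above, this inequality collapses to
\[
B_n^{1/(\mu+2)}\,(T^\star)^{(2\mu+3)/(\mu+2)}\ \le\ \frac{ncB_n}{2},
\]
i.e. to the statement that the polynomial (non-Gaussian) correction in the denominator is absorbed by the second summand of $\mathcal{C}$. This is exactly where the hypothesis $p\le\frac{\varepsilon}{2}\exp(c^2n^{1/(\mu+2)}/\mathcal{C})$, equivalently $\mathcal{C}\log(2p/\varepsilon)\le c^2 n^{1/(\mu+2)}$, is used: it controls $T^\star$ and keeps the estimate in the regime where the $A_n$-term dominates. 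I expect this denominator estimate — bounding $(T^\star)^{(2\mu+3)/(\mu+2)}$ against $ncB_n$ by means of the constraint on $p$ — to be the main obstacle and the only genuinely delicate computation, as it is the step that ties together the choice of $A_n$, the form of $\mathcal{C}$, and the admissible range of $p$.

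Once \eqref{conditionthm} is in place with $\psi^{-1}(\varepsilon/p)\le\sqrt{\mathcal{C}\log(2p/\varepsilon)}$, the conclusion is immediate from Theorem~\ref{mainresult}: with $\alpha=0$ we get $\lambda^\ast=4n^{-1/2}\psi^{-1}(\varepsilon/p)\le 4\sqrt{\mathcal{C}\log(2p/\varepsilon)/n}=\lambda$, so the stated $\lambda$ is admissible, and the sparsity inequality gives $R(\hat\theta_\lambda)-R(\overline\theta)\le 4\lambda^2\|\overline\theta\|_0/\kappa=\tfrac{64\mathcal{C}}{\kappa}\|\overline\theta\|_0\log(2p/\varepsilon)/n$ with probability at least $1-\varepsilon$. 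The union bound over the $p$ coordinates is already encoded in the $\varepsilon/p$ appearing inside $\psi^{-1}$, so no further argument is needed there.
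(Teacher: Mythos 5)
Your reduction is faithful to the paper's own proof: the rescaling of the covariance bound by $\max(X)$, the choice $A_n=2\tilde K^2\Psi(1,1)L_1 n$ with $\tilde K=K\max(X)$, the resulting $B_n$ matching the second summand of $\mathcal{C}$, and the final appeal to Theorem \ref{mainresult} with $\alpha=0$ are exactly the paper's steps. (Your packaging is even slightly cleaner: you keep the full denominator inside a globally valid $\psi$ and evaluate it once at $t^\star=\sqrt{\mathcal{C}\log(2p/\varepsilon)}$, whereas the paper truncates the range of $u$; at the one point where $\psi$ is used the two formulations reduce, up to constants, to the same inequality.) The genuine gap is the step you explicitly defer as ``the main obstacle and the only genuinely delicate computation'': that step is not a routine verification, and under the stated hypothesis on $p$ it is in fact false. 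The constraint $\mathcal{C}\log(2p/\varepsilon)\le c^2n^{1/(\mu+2)}$ only gives $T^\star=\sqrt{n\mathcal{C}\log(2p/\varepsilon)}\le c\,n^{(\mu+3)/(2(\mu+2))}$, hence
\begin{equation*}
B_n^{1/(\mu+2)}\,(T^\star)^{\frac{2\mu+3}{\mu+2}}
\ \le\ B_n^{1/(\mu+2)}\,c^{\frac{2\mu+3}{\mu+2}}\,n^{\frac{(\mu+3)(2\mu+3)}{2(\mu+2)^2}},
\qquad\text{with}\quad
\frac{(\mu+3)(2\mu+3)}{2(\mu+2)^2}=1+\frac{\mu+1}{2(\mu+2)^2}>1
\end{equation*}
(for $\mu=0$ the exponent is $9/8$). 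Since the right-hand side of your target inequality, $ncB_n/2$, is only linear in $n$, the inequality fails for large $n$ whenever $p$ sits near the top of its allowed range; ``expecting'' it to go through is therefore not enough, and no amount of bookkeeping with the stated constants will close it. What would close it is a strictly stronger restriction, $\mathcal{C}\log(2p/\varepsilon)\lesssim n^{1/(2\mu+3)}$ (note $\frac{1}{2\mu+3}<\frac{1}{\mu+2}$), or equivalently restricting to regimes such as $p$ polynomial in $n$, where $\log(2p/\varepsilon)=o\bigl(n^{1/(2\mu+3)}\bigr)$ and the $A_n$-term genuinely dominates.

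In fairness, you have reproduced a defect that is already present in the paper: its claim that $u\le c\,n^{1/(2\mu+4)}$ forces $C_2\,n^{-1/(2\mu+4)}u^{(2\mu+3)/(\mu+2)}\le C_2\,c$ fails for exactly the same reason (that bound actually requires $u\lesssim n^{1/(2(2\mu+3))}$, or else $c\lesssim n^{-1/(2(\mu+2))}$, which is incompatible with $c$ being a fixed constant), so the corollary as printed really only holds under the tighter restriction on $p$. Still, judged as a standalone proof attempt, yours stops precisely at the point where the argument breaks: you correctly isolated the crux, but a complete write-up must either carry out that estimate under the corrected constraint on $p$, or state the corollary with the exponent $1/(2\mu+3)$ in place of $1/(\mu+2)$.
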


So the rate is the same than in the iid case. The only difference is in the
constant, and a restriction for very large values of $p$.

\begin{proof}
For the sake of shorteness, let us put
$$
 C_{1}  =4 K^{2} \max(X)^{2}\Psi(1,1)L_1 \text{ and }
 C_{2} = 2L_{2}\max(X)(K\vee M)\left(\frac{2^{\mu+3}}{\Psi(1,1)}\vee 1\right)
$$
and note that $\mathcal{C}=C_{1}+cC_{2}$.
First, note that for any $j\in\{1,\ldots,p\}$,
\begin{multline*} \left| \mbox{cov }\left( W^{(j)}_{s_1}\cdots W^{(j)}_{s_u},
W^{(j)}_{t_1}\cdots
W^{(j)}_{t_v} \right) \right| \\
\leq \left(\sup_{i} X_{i}^{(j)}\right)^{u+v} K^{2} M^{u+v-2}  \Psi(u,v)
\rho(t_1-s_u)
\\
\leq \tilde{K}^{2} \tilde{M}^{u+v-2}  \Psi(u,v)
\rho(t_1-s_u)
\end{multline*}
if we put $\tilde{K} = \max(X) K$ and $\tilde{M}=\max(X) M$.
Using Theorem \ref{T2.1}, we obtain for any $j$,
$$
\mathbb{P} \left(\left|\sum_{i} W_{i}^{(j)}\right| \geq t\right)
\leq 2 \exp\left( -\frac{t^{2}/2}{A_n \,+\, B_n^{\frac1{\mu+2}}
t^{\frac{2\mu+3}{\mu+2}}}\right)
$$
where $A_n=\sigma^{2}_{n} \leq 2n\tilde{K}^{2}\Psi(1,1) L_{1}$ and
\begin{eqnarray*} B_n = 2  (K\vee M)
L_2 \; \left( \Big( \frac{ 2^{4+\mu} n  K^{2}  L_1 }{ A_n }
\Big) \vee 1 \right),
\end{eqnarray*}
in other words:
$$
\mathbb{P} \left( \left|\sum_{i} W_{i}^{(j)}\right| \geq t\right)
\leq 2 \exp\left( -\frac{t^{2}/2}{C_{1} n  \,+\, C_{2}
t^{\frac{2\mu+3}{\mu+2}}}\right).
$$
Now, let us put $u=t/\sqrt{n}$, we obtain
\begin{multline*}
\mathbb{P} \left( \left|\frac{1}{n}\sum_{i} W_{i}^{(j)}\right| \geq u n^{-\frac{1}{2}}\right)
\leq 2 \exp\left( -\frac{n u^{2}/2}{C_{1} n  \,+\, C_{2}
n^{\frac{2\mu+3}{2\mu+4}}u^{\frac{2\mu+3}{\mu+2}}}\right)
\\
\leq 2  \exp\left(-\frac{u^{2}/2 }{C_{1} +C_{2} n^{-\frac1{2\mu+4}} u^{\frac{2\mu+3}{\mu+2}} }\right).
\end{multline*}
Remark that we cannot in general compute explicitely the inverse of this function
but we can upper-bound
the range for $u$:
$$
 u \leq c\cdot n^{\frac1{2\mu+4}}
$$
In this case,
$$
\mathbb{P} \left( \left|\frac{1}{n}\sum_{i} W_{i}^{(j)}\right| \geq u n^{-\frac{1}{2}}\right)
\leq 2 \exp\left(-\frac{u^{2}/2 }{C_{1} +C_{2} c }\right)
 = 2 \exp\left(-\frac{u^{2} }{2\,\mathcal{C}} \right) =: \psi(u)
$$
and so
$$ \psi^{-1}(y) = \sqrt{\mathcal{C}\log \left(\frac{2}{y}\right)} .$$
So we can take, following Theorem \ref{mainresult},
$$\lambda = 4 n^{-\frac{1}{2}} \psi^{-1}\left(\frac{\varepsilon}{p}\right) =
        4 n^{-\frac{1}{2}} \sqrt{\mathcal{C} \log\left(\frac{2 p}{\varepsilon}\right)}$$
as soon as $\psi^{-1}(\varepsilon/p)<n^{1/(2\mu+4)}$. For example, for a fixed number of observations
$n$ and a fixed confidence level $\varepsilon$, we have the restriction:
$$
p \leq \frac{\varepsilon}{2}
\exp\left(\frac{c n^{\frac{1}{\mu+2}}}{\mathcal{C}}\right).
$$
Under this condition we have, by Theorem \ref{mainresult},
$$
\mathbb{P}\left\{
\begin{array}{c}
\displaystyle{R(\hat{\theta}_{\lambda})-R(\overline{\theta})
                   \leq \frac{64 \,\mathcal{C} }{\kappa}

\frac{\|\overline{\theta}\|_{0} \log \left(\frac{2p}{\varepsilon}\right) }{n}}
\\
\\
{\rm and,}
\\
\\
\displaystyle{\|\hat{\theta}_{\lambda}-\overline{\theta}\|_{1}
     \leq \frac{8\, \mathcal{C}}{\kappa}

\frac{\|\overline{\theta}\|_{0} \log \left(\frac{2p}{\varepsilon}\right) }{n^{\frac{1}{2}}}}
\end{array}
\right\}\geq 1-\varepsilon,
$$
this ends the proof.
\end{proof}

\subsection{Simulations}

In order to illustrate the results, we propose a very short simulation study.
The purpose of this study is not to show the good performances of the estimator
in practice or to give recipes for the choice of $\lambda$. The aim is more to
show that the performances of the iid setting are likely to be obtained in the
dependent setting if the dependence coefficients are small.

We use the following model:
$$ Y_{i} = \theta' X_{i} + \varepsilon_{i}, \quad 1\leq i \leq n=30 $$
where the $X_{i}$'s will be treated as fixed design, but in practice will
be iid vectors in $\mathbb{R}^{p}$ with $p=50$, with distribution $\mathcal{N}_p(0,\Sigma)$
where $\Sigma$ is given by $\Sigma_{i,j}=0.5^{|i-j|}$.
%
\begin{figure}[!b]
\centering
\includegraphics*[width=11cm,height=6.5cm]{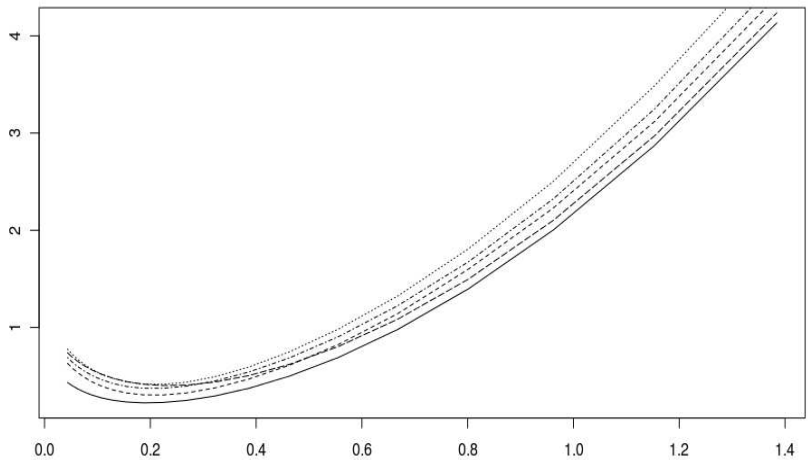}
\caption{\label{courbes}Results of the experiments. The $x$-axis gives the value $g$ where
$\lambda = g\sqrt{\log(p)/n}$. The $y$-axis gives $\sum_{i=1}^{n}(\hat{\theta}_{\lambda}'X_{i}-
\theta'X_{i})^{2}$ the error of reconstruction of the signal. The lines code is the following:
$\vartheta=-0.95$: solid line, $\vartheta=-0.5$: short dashed line, $\vartheta=0$: dotted line,
$\vartheta=0.5$: dot/dash, $\vartheta=0.95$: long dash.}
\end{figure}
The parameter is given
by $\theta=(3,1.5,0,0,2,0,0,\ldots)'\in\mathbb{R}^{p}$. This is the toy example used
by Tibshirani \cite{LassoTib}. Let $\vartheta\in]-1,1[$.

The noise satisfies
$ \varepsilon_{i} = \vartheta \varepsilon_{i-1} + \eta_{i} $, for $i\geq 2$,
where the $\eta_{i}$ are iid $\mathcal{N}(0,1-\vartheta^{2})$
and $\varepsilon_{1}\sim\mathcal{N}(0,1)$. Note that this ensure that
$\mathbb{E}(\varepsilon_{i}^{2})=1$ for any $i$, so the noise level does
not depent on $\vartheta$. In the experiments, $$\vartheta\in\{-0.95,-0.5,
0,0.5,0.95\}.$$
We fixed a grid of values $\mathcal{G}\subset ]0,1.5[$ and we computed, for every
experiment, the LASSO estimator with $\lambda=g \sqrt{\log(p)/n}$ for all $g\in\mathcal{G}$.
We have repeated the experiment $25$ times for every value of $\vartheta$ and report the results
in Figure \ref{courbes}.

We can remark that all the curves are very similar. The minimum reconstruction error is obtained
for $g\simeq 0.2$, that corresponds to $\lambda\simeq 0.072$.
Note that in the iid case, it is smaller than the theoretical value given by
Theorem \ref{mainresult}, $\lambda=4\sigma\sqrt{2\log (p/\varepsilon)/n}\simeq 2.56$
for $\varepsilon=1/10$, that would correspond to $g \simeq 7.10$, a value that would
not event stand in the figure!

\section{Application to density estimation}
\label{density}

Here we apply Theorem \ref{mainresult} and
Section
\ref{examples} to the context of density estimation.
Let us remind that in this setting,
$$ W_{i}^{(j)} =  \mathbb{E}[\varphi_{j}(Z_{1})]-\varphi_{j}(Z_{i}).$$

\subsection{Density estimation in the iid case}

If the $Z_{i}$ are iid with density $f$ and if $\|\varphi_{j}\|_{\infty}<B$ for
any $j\in\{1,\ldots,p\}$ then we can
apply Hoeffding inequality \cite{hoeffding} to upper bound
$$
 \left| \int \varphi_{j}(x) f(x) dx-\frac{1}{n}\sum_{i=1}^{n}\varphi_{j}(Z_{i})
                   \right|.
$$
We obtain
$$
  \mathbb{P} \left( \left|\frac{1}{n}\sum_{i=1}^{n}W^{(j)}_{i}\right|
        \geq \frac{t}{\sqrt{n}} \right)   \leq \psi(t) = 2 \exp(-\frac{t^{2}}{2B^{2}}) .
$$
So we can apply Theorem \ref{mainresult}.

\begin{cor}
In the context of density estimation, under Assumption ${\bf A}(\kappa)$, if the
$Z_{i}$ are iid with
density $f$ and if $\|\varphi_{j}\|_{\infty}<B$ for any $j\in\{1,\ldots,p\}$,
the choice $\lambda=4B\sqrt{2n\log(2p/\varepsilon)}$ leads to
$$
\mathbb{P}\left( \int
\left(f_{\hat{\theta}_{\lambda}}(x)-f_{\overline{\theta}}(x)\right)^{2}dx
                   \leq \frac{128 B^{2} }{\kappa}
                      \frac{\|\overline{\theta}\|_{0} \log\frac{2p}{\varepsilon}}
                               {n} \right)\geq 1-\varepsilon.
$$
\end{cor}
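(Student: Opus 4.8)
The plan is to read this corollary as a direct instantiation of Theorem \ref{mainresult} with $\alpha=0$: the only work is to produce the pair $(\alpha,\psi)$ through Hoeffding's inequality, to make the choice of $\lambda$ explicit by inverting $\psi$, and finally to translate the excess-risk bound of Theorem \ref{mainresult} into the announced integrated squared error.

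First I would verify condition \eqref{conditionthm}. In the density setting $W_i^{(j)}=\mathbb{E}[\varphi_j(Z_1)]-\varphi_j(Z_i)$, and since the $Z_i$ are iid with common density $f$, for each fixed $j$ the variables $(W_i^{(j)})_{1\le i\le n}$ are iid and centered. Because $\|\varphi_j\|_\infty<B$, each $W_i^{(j)}$ takes values in an interval of length at most $2B$. Hoeffding's inequality \cite{hoeffding} then gives, for $s>0$,
$$ \mathbb{P}\left(\left|\sum_{i=1}^n W_i^{(j)}\right|\ge s\right)\le 2\exp\left(-\frac{2s^2}{n(2B)^2}\right), $$
and taking $s=\sqrt{n}\,t$ yields exactly the estimate stated just before the corollary,
$$ \mathbb{P}\left(\left|\frac{1}{n}\sum_{i=1}^n W_i^{(j)}\right|\ge \frac{t}{\sqrt{n}}\right)\le 2\exp\left(-\frac{t^2}{2B^2}\right). $$
Thus \eqref{conditionthm} holds with $\alpha=0$ and $\psi(t)=2\exp(-t^2/(2B^2))$.

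Next I would invert $\psi$. Solving $2\exp(-t^2/(2B^2))=\varepsilon/p$ gives $\psi^{-1}(\varepsilon/p)=B\sqrt{2\log(2p/\varepsilon)}$, so the optimal penalty of Theorem \ref{mainresult} is
$$ \lambda^{*}=4 n^{-\frac{1}{2}}\psi^{-1}\!\left(\frac{\varepsilon}{p}\right)=4B\sqrt{\frac{2\log(2p/\varepsilon)}{n}}. $$
With this value the first inequality of Theorem \ref{mainresult} reads $R(\hat{\theta}_{\lambda})-R(\overline{\theta})\le 4(\lambda^{*})^2\|\overline{\theta}\|_0/\kappa$, and substituting $(\lambda^{*})^2=32B^2\log(2p/\varepsilon)/n$ produces the constant $128B^2/\kappa$ together with the rate $\|\overline{\theta}\|_0\log(2p/\varepsilon)/n$, exactly as announced (this parallels the regression iid corollary).

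The one genuinely density-specific step is to identify the excess risk with the squared $L^2$ distance between $f_{\hat{\theta}_{\lambda}}$ and $f_{\overline{\theta}}$. Using $R(\theta)=\int(f_\theta-f)^2-\mathrm{cst}$ one has $R(\hat{\theta}_{\lambda})-R(\overline{\theta})=\int(f_{\hat{\theta}_{\lambda}}-f)^2-\int(f_{\overline{\theta}}-f)^2$. Since $\overline{\theta}$ minimizes $R$, the function $f_{\overline{\theta}}$ is the $L^2$-orthogonal projection of $f$ onto $\mathrm{span}(\varphi_1,\ldots,\varphi_p)$, so $f-f_{\overline{\theta}}$ is orthogonal to $f_{\hat{\theta}_{\lambda}}-f_{\overline{\theta}}$, and Pythagoras gives $R(\hat{\theta}_{\lambda})-R(\overline{\theta})=\int(f_{\hat{\theta}_{\lambda}}-f_{\overline{\theta}})^2$. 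Combining this with the bound of Theorem \ref{mainresult} on the event of probability at least $1-\varepsilon$ yields the claim. I do not expect a substantial obstacle: the analytic content is entirely carried by Theorem \ref{mainresult}, and the only points requiring care are getting the Hoeffding constants right (range $2B$, hence $2B^2$ in the exponent) and justifying the orthogonality identity that converts the excess risk into the stated integrated squared error.
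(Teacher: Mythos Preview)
Your proposal is correct and follows exactly the route the paper takes: the paper simply states the Hoeffding bound $\psi(t)=2\exp(-t^2/(2B^2))$ and says ``so we can apply Theorem~\ref{mainresult}'', without further detail. Your extra step---identifying $R(\hat\theta_\lambda)-R(\overline\theta)$ with $\int(f_{\hat\theta_\lambda}-f_{\overline\theta})^2$ via the orthogonal-projection/Pythagoras argument---is correct and makes explicit a point the paper leaves implicit; note also that your $\lambda^*=4B\sqrt{2\log(2p/\varepsilon)/n}$ is the right value (the $\sqrt{2n\log(2p/\varepsilon)}$ in the corollary statement is a typo, compare the regression corollary).
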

This result is essentially known, see \cite{spades}.

\subsection{Density estimation in the dependent case}

Note that if  as previously we work with bounded $\varphi_{j}(\cdot)$, we automatically
have moments of any order. So we will only state a result based on exponential
inequality.

So, using Theorem \ref{mainresult} and Theorem \ref{T2.1} we obtain:

\begin{cor}
 Let us assume that there are $L>0$ and $B\geq 1$ such that
$\varphi_{j}(\cdot)$ is $L$-Lipschitz and
$\|\varphi_{j}\|_{\infty}<B$ for any $j\in\{1,\ldots,p\}$. 
 Let us assume that $Z_{1}$, \ldots, $Z_{n}$ satisfy
$$ \forall k\geq 0,\quad
\sum_{s=0}^{\infty}(s+1)^{k} \eta_{Z}(s) \leq L_1 L_2^k (k!)^{\mu} $$
for some $L_{1},L_{2},\mu>0$.
Let us put a $c>0$, define
$$\mathcal{C}:=4BLL_{1} + (2^{3+\mu}B L_{1})^{1/(\mu+2)} c$$
and assume that $p$, $n$ and the confidence level $\varepsilon$ are such that
$$ p \leq \frac{\varepsilon}{2} \exp\left(\frac{c^{2}
n^{\frac{1}{\mu+2}}}{\mathcal{C}}\right).$$
Then
\begin{equation}
\label{eqdendep}
\mathbb{P}\left(
\int
\left(f_{\hat{\theta}_{\lambda}}(x)-f_{\overline{\theta}}(x)\right)^{2}dx
                   \leq
\frac{64\, \mathcal{C}  }{\kappa}
\frac{\|\overline{\theta}\|_{0}\log\left(\frac{2p}{\varepsilon}\right)}{n}
\right) \geq 1-\varepsilon.
\end{equation}
\end{cor}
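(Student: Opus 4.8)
The plan is to follow, essentially verbatim, the argument of the preceding exponential-inequality corollary for regression; the only genuinely new ingredients are the transfer of the dependence structure from $Z$ to the coordinate processes $W^{(j)}=(W_i^{(j)})_{1\le i\le n}$, and the identification of the excess risk $R(\hat\theta_\lambda)-R(\overline\theta)$ with the $L^2$ distance $\int(f_{\hat\theta_\lambda}-f_{\overline\theta})^2\,dx$ appearing in \eqref{eqdendep}.

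First I would record that $W_i^{(j)}=\mathbb{E}[\varphi_j(Z_1)]-\varphi_j(Z_i)$ is centered and, since $\|\varphi_j\|_\infty<B$, uniformly bounded (by a multiple of $B$). Because $\varphi_j$ is $L$-Lipschitz and bounded, Proposition \ref{lipeta} applies and shows that $\varphi_j(Z)$ is $\eta$-dependent with $\eta_{\varphi_j(Z)}(r)=L\,\eta_Z(r)$; adding the constant $\mathbb{E}[\varphi_j(Z_1)]$ and changing sign leaves all covariances unchanged, so the same coefficient governs $W^{(j)}$. Note that this is precisely the step where both hypotheses on $\varphi_j$ (Lipschitz \emph{and} bounded) are needed: the Lipschitz constant feeds the $\eta$-dependence and the sup-norm bound supplies the constant $M$ in the covariance condition \eqref{2.1}.

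Next, via Remark \ref{remarqueDN}, $\eta$-dependence of $W^{(j)}$ yields the covariance bound \eqref{2.1} required by Theorem \ref{T2.1}, with $\Psi$ the function~(b), $\Psi(u,v)=u+v$, a constant $K^2$ proportional to the sup-norm bound $M$, and $\rho(r)=\eta_{W^{(j)}}(r)=L\,\eta_Z(r)$. The hypothesis $\sum_{s=0}^\infty (s+1)^k\eta_Z(s)\le L_1L_2^k(k!)^\mu$ then gives the summability requirement of Theorem \ref{T2.1} with effective constants obtained by replacing $L_1$ by $LL_1$ (and keeping $L_2$, $\mu$); this is where the constant $\mathcal{C}$ is assembled from the variance factor $A_n$ (a multiple of $n$) and from $B_n$ (a constant), and I expect the careful bookkeeping of these explicit constants to be the main obstacle, everything else being mechanical. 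Applying Theorem \ref{T2.1} to each $W^{(j)}$ and then repeating the regression computation — substitute $u=t/\sqrt n$ and restrict $u$ to the range $u\le c\,n^{1/(2\mu+4)}$ so that the subdominant denominator term $n^{-1/(2\mu+4)}u^{(2\mu+3)/(\mu+2)}$ is absorbed into $c$ — produces, with $\alpha=0$, the function $\psi(u)=2\exp(-u^2/(2\mathcal{C}))$ in \eqref{conditionthm}. This is valid exactly when $\psi^{-1}(\varepsilon/p)\le c\,n^{1/(2\mu+4)}$, which rearranges to the stated restriction $p\le\frac{\varepsilon}{2}\exp(c^2 n^{1/(\mu+2)}/\mathcal{C})$.

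Finally, inverting gives $\psi^{-1}(y)=\sqrt{\mathcal{C}\log(2/y)}$, so the choice $\lambda=\lambda^*=4n^{-1/2}\psi^{-1}(\varepsilon/p)=4\sqrt{\mathcal{C}\log(2p/\varepsilon)/n}$ is admissible, and Theorem \ref{mainresult} gives $R(\hat\theta_\lambda)-R(\overline\theta)\le 4\lambda^2\|\overline\theta\|_0/\kappa=64\,\mathcal{C}\,\|\overline\theta\|_0\log(2p/\varepsilon)/(\kappa n)$ with probability at least $1-\varepsilon$. To put this in the form \eqref{eqdendep}, I would use that $\overline\theta$ minimizes $\theta\mapsto\int(f_\theta-f)^2\,dx$, so $f_{\overline\theta}$ is the $L^2$-projection of $f$ onto $\mathrm{span}(\varphi_1,\dots,\varphi_p)$ and $\int(f_{\hat\theta_\lambda}-f_{\overline\theta})(f_{\overline\theta}-f)\,dx=0$ by orthogonality; the Pythagorean identity then yields $R(\hat\theta_\lambda)-R(\overline\theta)=\int(f_{\hat\theta_\lambda}-f_{\overline\theta})^2\,dx$, and substituting this into the previous inequality gives exactly \eqref{eqdendep}.
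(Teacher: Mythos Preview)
Your proposal is correct and follows essentially the same route as the paper's proof: transfer the $\eta$-dependence from $Z$ to $\varphi_j(Z)$ via Proposition~\ref{lipeta}, invoke Remark~\ref{remarqueDN} to feed Theorem~\ref{T2.1} with $\Psi(u,v)=u+v$, and then replay the regression corollary's computation (substitute $u=t/\sqrt{n}$, restrict $u\le c\,n^{1/(2\mu+4)}$, invert $\psi$, and apply Theorem~\ref{mainresult}). Your final Pythagorean justification for $R(\hat\theta_\lambda)-R(\overline\theta)=\int(f_{\hat\theta_\lambda}-f_{\overline\theta})^2\,dx$ is a nice touch that the paper leaves implicit.
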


\begin{rmk}
The assumption that the $\varphi_{j}$ are all $L$-Lipschitz for a constant $L$
excludes a lot of interesting dictionaries. If we assume that the $\varphi_{j}$
are $L(n)$-Lipschitz (this would be the case if we used the first $n$ functions
in the Fourier basis for example), then we will suffer a loss in \eqref{eqdendep}
when compared to the iid case.
However, note that Equation~\eqref{lipchchch}
below is the starting point of our proof, so we cannot hope to find a simple way to
remove this hypothesis when using $\eta$-weak dependence. This will be
the object of a future work.
\end{rmk}

\begin{proof}
As $\varphi_j$ is $K$-Lipschitz, using Proposition \ref{lipeta} we have:
\begin{equation} \eta_{\varphi_{j}(Z)}(r) \leq L\eta_{Z}(r) .
\label{lipchchch}
\end{equation}
So we have
$$  \forall k\geq 0,\quad
\sum_{k=1}^{\infty}(s+1)^{k} \eta_{\varphi_{j}(Z)}(r) \leq L L_1 L_2^k (k!)^{\mu}. $$
Moreover, following Remark~\ref{remarqueDN},
$$
\left| \mbox{cov }\left( \varphi_{j}(Z_{s_1})\cdots \varphi_{j}(Z_{s_u}),
\varphi_{j}(Z_{t_1})\cdots
\varphi_{j}(Z_{t_v}) \right) \right|
\leq  B^{u+v-1}  (u+v) L\cdot \eta_{Z}(r).
$$
So we can apply Theorem \ref{T2.1} with $\Psi(u,v)=u+v$ and we obtain
$$
\mathbb{P}\left(
\left|\sum_{i=1}^{n}W_{i}^{(j)}\right|
> t
\right)
\leq
2 \exp\left(\frac{-t^{2}/2}{A_{n} + B_{n}^{\frac1{\mu+2}}t^{\frac{2\mu+3}{\mu+2}}}\right)
$$
with $A_{n}=4nBLL_{1}$ and
$$B_{n}=2^{3+\mu} B L_{1} ,$$
in other words
$$
\mathbb{P}\left(
\left|\sum_{i=1}^{n}W_{i}^{(j)}\right|
> t
\right)
\leq
2 \exp\left(\frac{-t^{2}/2}{4nBLL_{1} + (2^{3+\mu}B L_{1})^{\frac1{\mu+2}}
 t^{\frac{2\mu+3}{\mu+2}}}\right).
$$
We then put $u=t\sqrt{n}$ to obtain
$$
\mathbb{P}\left(
\left|\frac{1}{n}\sum_{i=1}^{n}W_{i}^{(j)}\right|
> \frac{u}{\sqrt{n}}
\right)
\leq
2 \exp\left(\frac{-nu^{2}/2}{4BLL_{1} + (2^{3+\mu}B L_{1})^{\frac1{\mu+2}}
 n^{-\frac1{2\mu+4}} u^{\frac{2\mu+3}{\mu+2}}}\right).
$$
Here again, if we have
$$ u\leq c n^{1/(2\mu+4)}$$
then
\begin{multline*}
\mathbb{P}\left(
\left|\frac{1}{n}\sum_{i=1}^{n}W_{i}^{(j)}\right|
> \frac{u}{\sqrt{n}}
\right)
\\
\leq
2 \exp\left(\frac{-nu^{2}/2}{4BLL_{1} + (2^{3+\mu}B L_{1})^{\frac1{\mu+2}} c} \right)
2\exp\left(-\frac{n u^{2}}{2\,\mathcal{C}}\right)
=:\psi(u).
\end{multline*}
So we take, following Theorem \ref{mainresult},
$$ \lambda = \frac{4}{\sqrt{n}}\psi^{-1}\left(\frac{\varepsilon}{p}\right)
= 4\sqrt{\frac{\mathcal{C}
\log\left(\frac{2p}{\varepsilon}\right)}{n}}
$$
and we obtain, with probability at least $1-\varepsilon$,
$$
\int
\left(f_{\hat{\theta}_{\lambda}}(x)-f_{\overline{\theta}}(x)\right)^{2}dx
                   \leq
\frac{64 \,\mathcal{C}  }{\kappa}
\frac{\|\overline{\theta}\|_{0}\log\left(\frac{2p}{\varepsilon}\right)}{n}.
$$
\end{proof}

\section{Conclusion}

In this paper, we showed how the LASSO and other $\ell_{1}$-penalized
methods can be extended to the case of dependent random variables.

 An open and ambitious
question to be adressed later is  to find a good data-driven way to calibrate the regularization
parameter $\lambda$ when we don't know in advance the dependence coefficients
of our observations.

Anyway this first step with sparsity in the dependent setting is done for accurate applications and our brief simulations let us think that such techniques are reasonable for time series.

Here again extensions to random fields or to dependent point processes seem plausible.

\section{Proofs}
\label{proofs}

\begin{proof}[Proof of Theorem \ref{mainresult}]
By definition,
$$
\frac{1}{n}\sum_{i=1}^{n} Q(Z_{i},\hat{\theta}_{\lambda}) + \lambda
\|\hat{\theta}_{\lambda}\|_{1}
\leq
\frac{1}{n}\sum_{i=1}^{n} Q(Z_{i},\overline{\theta}) + \lambda
\|\overline{\theta}\|_{1}
$$
and so
\begin{multline}
\label{step1}
R(\hat{\theta}_{\lambda})-R(\overline{\theta})
\leq
 \int_{\mathcal{Z}^{n}}\frac{1}{n}\left\{\sum_{i=1}^{n}
\left[Q(z_{i},\hat{\theta}_{\lambda})
                -Q(z_{i},\overline{\theta})\right]\right\}
d\mathbb{P}(z_{1},\ldots,z_{n})
\\
   - \frac{1}{n}\sum_{i=1}^{n}
\left[Q(Z_{i},\hat{\theta}_{\lambda})-Q(Z_{i},\overline{\theta})\right]
+ \lambda \left(\|\overline{\theta}\|_{1}-\|\hat{\theta}_{\lambda}\|_{1}\right).
\end{multline}
Now, as $Q$ is quadratic wrt $\theta$ we have, for any $z$,
\begin{equation}
\label{help1}
 Q(z,\hat{\theta}_{\lambda}) = Q(z,\overline{\theta})
         +(\hat{\theta}_{\lambda}-\overline{\theta})'\frac{\partial
Q(z,\overline{\theta})}{\partial \theta}
         + \frac{1}{2}(\hat{\theta}_{\lambda}-\overline{\theta})'M
               (\hat{\theta}_{\lambda}-\overline{\theta}).
\end{equation}
Moreover, as $\overline{\theta}$ is the minimizer of $R(.)$, we have the
relation
\begin{equation}
\label{help2}
  \frac{\partial R(\overline{\theta})}{\partial \theta} =
  \int_{\mathcal{Z}^{n}} \frac{1}{n} \sum_{i=1}^{n}
          \frac{\partial Q(z_{i},\overline{\theta})}{\partial \theta}
d\mathbb{P}(z_{1},\ldots,z_{n})
             = 0.
\end{equation}
Pluging \eqref{help1} and \eqref{help2} into \eqref{step1} leads to
$$
R(\hat{\theta}_{\lambda})-R(\overline{\theta})
\leq
   (\hat{\theta}_{\lambda}-\overline{\theta})'\frac{1}{n}\sum_{i=1}^{n}
      \frac{\partial Q(Z_i,\overline{\theta})}{\partial \theta}
 + \lambda \left(\|\overline{\theta}\|_{1}-\|\hat{\theta}_{\lambda}\|_{1}\right)
$$
and then
\begin{equation}
\label{step2}
R(\hat{\theta}_{\lambda})-R(\overline{\theta})
\leq
   \|\hat{\theta}_{\lambda}-\overline{\theta}\|_{1}
          \sup_{1\leq j \leq p} \left|\frac{1}{n}\sum_{i=1}^{n}
               \frac{\partial Q(Z_i,\overline{\theta})}{\partial
\theta_{j}}\right|
 + \lambda
\left(\|\overline{\theta}\|_{1}-\|\hat{\theta}_{\lambda}\|_{1}\right).
\end{equation}
Now, we remind that we have the hypothesis
$$ \forall j\in\{1,\ldots,p\},\quad \mathbb{P}
    \left(\left|\frac{1}{n}\sum_{i=1}^{n}\frac{1}{2}\frac{\partial
Q(Z_{i},\overline{\theta})}
                      {\partial \theta_{j}}\right|\geq n^{\alpha-\frac{1}{2}} t \right)
\leq \psi(t)$$
that becomes, with a simple union bound argument,
$$ \mathbb{P}
    \left(\sup_{1\leq j \leq
p}\left|\frac{1}{n}\sum_{i=1}^{n}\frac{1}{2}\frac{\partial
Q(Z_{i},\overline{\theta})}
                      {\partial \theta_{j}}\right|\geq n^{\alpha-\frac{1}{2}} t \right)
\leq p \psi(t) $$
and so, if we put $t =\psi^{-1}(\varepsilon/p)$,
$$ \mathbb{P}
    \left(\sup_{1\leq j \leq
p}\left|\frac{1}{n}\sum_{i=1}^{n}\frac{1}{2}\frac{\partial
Q(Z_{i},\overline{\theta})}
                      {\partial \theta_{j}}\right|\geq n^{\alpha-\frac{1}{2}} \psi^{-1}
                            \left(\frac{\varepsilon}{p}\right) \right) \leq
\varepsilon. $$
Also remark that $n^{\alpha-1/2} \psi^{-1}(\varepsilon/p)=\lambda^{*}/4\leq \lambda/4$.
So until the end of the proof, we will work on the event
$$\left\{\omega\in\Omega:\sup_{1\leq j \leq
p}\left|\frac{1}{n}\sum_{i=1}^{n}\frac{1}{2}\frac{\partial
                              Q(Z_{i}(\omega),\overline{\theta})}
                      {\partial \theta_{j}}\right| \leq \frac{\lambda}{4} \right\} $$
true with probability at least $1-\varepsilon$. Going back to \eqref{step2}, we
have
$$
R(\hat{\theta}_{\lambda})-R(\overline{\theta})
\leq
    \frac{\lambda}{2}
       \|\hat{\theta}_{\lambda}-\overline{\theta}\|_{1}
 + \lambda \left(\|\overline{\theta}\|_{1}-\|\hat{\theta}_{\lambda}\|_{1}\right)
$$
and then
\begin{align*}
R(\hat{\theta}_{\lambda})-R(\overline{\theta}) + \frac{\lambda}{2}
       \|\hat{\theta}_{\lambda}-\overline{\theta}\|_{1}
&
\leq  \lambda \left(\|\hat{\theta}_{\lambda}-\overline{\theta}\|_{1}
       + \|\overline{\theta}\|_{1}-\|\hat{\theta}_{\lambda}\|_{1}\right)
\\[-1pt]
& = \lambda \left( \sum_{j=1}^{p}
|(\hat{\theta}_{\lambda})_{j}-\overline{\theta}_{j}|
                +
\sum_{j=1}^{p}(|\overline{\theta}_{j}|-|(\hat{\theta}_{\lambda})_{j}|)
          \right)
\\[-1pt]
& = \lambda \left( \sum_{j:\overline{\theta}_{j}\neq 0}
|(\hat{\theta}_{\lambda})_{j}-\overline{\theta}_{j}|
                + \sum_{j:\overline{\theta}_{j}\neq
0}(|\overline{\theta}_{j}|-|(\hat{\theta}_{\lambda})_{j}|)
          \right)
\end{align*}
that leads to the following inequality that will play a central role in the end
of the proof:
\begin{equation}
\label{step3}
R(\hat{\theta}_{\lambda})-R(\overline{\theta}) + \frac{\lambda}{2}
       \|\hat{\theta}_{\lambda}-\overline{\theta}\|_{1}
\leq
2\lambda \sum_{j:\overline{\theta}_{j}\neq 0}
|(\hat{\theta}_{\lambda})_{j}-\overline{\theta}_{j}|.
\end{equation}
First, if we remind that $R(\hat{\theta}_{\lambda})-R(\overline{\theta})\geq 0$,
\eqref{step3} leads to
$$
\|\hat{\theta}_{\lambda}-\overline{\theta}\|_{1}
\leq 4 \sum_{j:\overline{\theta}_{j}\neq 0}
|(\hat{\theta}_{\lambda})_{j}-\overline{\theta}_{j}|
$$
and so
$$
\sum_{j:\overline{\theta}_{j} = 0}
|(\hat{\theta}_{\lambda})_{j}-\overline{\theta}_{j}|
\leq 3 \sum_{j:\overline{\theta}_{j}\neq 0}
|(\hat{\theta}_{\lambda})_{j}-\overline{\theta}_{j}|.
$$
So we can take $v:=\hat{\theta}_{\lambda}-\overline{\theta}$ in Assumption ${\bf
A}(\kappa)$.
So, \eqref{step3} leads to
\begin{align}
R(\hat{\theta}_{\lambda})-R(\overline{\theta}) +
\frac{\lambda}{2}\|\hat{\theta}_{\lambda}-\overline{\theta}\|_{1}
\label{step4}
& \leq 2\lambda \sum_{j:\overline{\theta}_{j}\neq 0}
|(\hat{\theta}_{\lambda})_{j}-\overline{\theta}_{j}|
\\[-1pt]
\nonumber
& \leq 2 \lambda \left( \|\overline{\theta}\|_{0}
                     \sum_{j:\overline{\theta}_{j}\neq 0}
[(\hat{\theta}_{\lambda})_{j}-\overline{\theta}_{j}]^{2}
                           \right)^{\frac{1}{2}}
\\[-1pt]
\nonumber
& \leq 2 \lambda \left( \frac{\|\overline{\theta}\|_{0}}{\kappa}
(\hat{\theta}_{\lambda}-\overline{\theta})'\frac{\mathbf{M}}{2}(\hat{\theta}_{
\lambda}-\overline{\theta})
                    \right)^{\frac{1}{2}}
\\[-1pt]
& = 2 \lambda \left( \frac{\|\overline{\theta}\|_{0}}{\kappa}
                    \left[R(\hat{\theta}_{\lambda})-R(\overline{\theta})\right]
                    \right)^{\frac{1}{2}}.
\label{step5}
\end{align}
We conclude that
$$
R(\hat{\theta}_{\lambda})-R(\overline{\theta})
\leq \frac{4\lambda^{2} \|\overline{\theta}\|_{0}}{\kappa}.
$$
Now remark that \eqref{step4} to \eqref{step5} states that a convex quadratic
function of $\bigl[R(\hat{\theta}_{\lambda})-R(\overline{\theta})\bigr]$
is negative, so both roots of that quadratic are real. This leads to
$$ \|\hat{\theta}_{\lambda}-\overline{\theta}\|_{1}
     \leq \frac{2\lambda \|\overline{\theta}\|_{0}}{\kappa}.$$
This ends the proof.
\end{proof}
\vspace*{-6pt}
\eject

\begin{proof}[Proof of Proposition \ref{momcomb}]
First
$$
\left|\E(W^{(j)}_1+\cdots+W^{(j)}_n)^{\ell}\right|\le
\ell!A^{(j)}_{\ell,n}\equiv \ell!\sum_{1\le k_1,\ldots,k_{\ell}\le n}\left|\E
W^{(j)}_{k_{1}}\cdots W^{(j)}_{k_{\ell}}\right|.
$$
 The same combinatorial arguments as in \cite{MR1719345} yield for $p\le 2q$
\begin{eqnarray}
\label{eqrec}
A^{(j)}_{\ell,n}&\le&
C^{(j)}_{q,\ell,n}+\sum_{m=2}^{\ell-2}A^{(j)}_{m,n}A^{(j)}_{\ell-m,n}, \mbox{
where}\\
\label{coefsum}C^{(j)}_{q,\ell,n}&\equiv&(p-1)n\sum_{r=0}^{n-1}(r+1)^{\ell-2}c_{
W^{(j)},2q}(r).
\end{eqnarray}
Let us now assume the condition (\ref{borneC}) then
\begin{eqnarray*}
C^{(j)}_{q,\ell,n}&\le&C(\ell-1)n\sum_{r=0}^{n-1}(r+1)^{\ell-2-q}\\
&\le&C(\ell-1)n\int_{2}^{n+1}x^{\ell-2-q}\,dx, \qquad \mbox{ if }  \ell\ge q+2\\
&\le& C(q+1)(n+1)^{2} , \qquad  \qquad  \qquad \mbox{ if }  \ell= q+2\\
&\le& C\frac{\ell-1}{\ell-q-2}(n+1)^{\ell-q} , \ \qquad  \qquad \mbox{ if }
\ell\ge q+2\\
&\le&C(\ell-1)n\int_{1}^{n}x^{\ell-2-q}\,dx, \qquad \mbox{ if } \ell<q+2\\
&\le& C\frac{\ell-1}{q+2-\ell}n. \end{eqnarray*}
A rough bound is thus $C^{(j)}_{q,\ell,n}\le C(\ell-1)n^{(\ell-q)\vee1} $ and we
thus derive
\begin{equation}
\label{eq7}
\begin{array}{lllllllll}
A^{(j)}_{2,n}&\le& Cn,&
A^{(j)}_{3,n}&\le& 2Cn,&
A^{(j)}_{4,n}&\le& 4Cn^2\\
A^{(j)}_{5,n}&\le& 8Cn^2,&
A^{(j)}_{6,n}&\le& 17Cn^3.
\end{array}
\end{equation}
Now using precisely condition (\ref{borneC}) with the relation (\ref{eqrec}) we
see that
if $a_2=1$, and $a_3=2$ then the sequence recursively defined as
\begin{equation}
\label{recur}
a_m=m-1+\sum_{k=2}^{m-2}a_k a_{m-k}
\end{equation}
satisfies
$
A_m^{(j)}\le a_mC^{[\frac m2]} n^{[\frac m2]}.
$
Remember that
\begin{equation*}
d_m\equiv \frac1m\frac{(2m-2)!}{((m-1)!)^2}, \qquad m=2, 3, \ldots\end{equation*}
hence as in \cite{MR1719345} we quote that $$a_{m}\le d_m $$ is less that the
$m$-th Catalan number, $d_m$ and this ends the proof.
\end{proof}

\makeatletter

\def\@lbibitem[#1]#2{%
  \if\relax\@extra@b@citeb\relax\else
    \@ifundefined{br@#2\@extra@b@citeb}{}{%
     \@namedef{br@#2}{\@nameuse{br@#2\@extra@b@citeb}}}\fi
   \@ifundefined{b@#2\@extra@b@citeb}{\def\NAT@num{}}{\NAT@parse{#2}}%
   \item[\hyper@natanchorstart{#2\@extra@b@citeb}\@biblabel{\NAT@num}%
    \hyper@natanchorend\hfill]%
    \NAT@ifcmd#1(@)(@)\@nil{#2}}

\makeatother

\end{document}